\newcommand{\menge}[2]{\big\{{#1} \;|\; {#2}\big\}}
\newcommand{\emp}{\ensuremath{{\varnothing}}}
\newcommand{\scal}[2]{\left\langle{#1}\mid {#2} \right\rangle}
\newcommand{\vuo}{\ensuremath{\mbox{\footnotesize$\square$}}}
\newcommand{\HH}{\ensuremath{\mathcal H}}
\newcommand{\GG}{\ensuremath{\mathcal G}}
\newcommand{\KKK}{\ensuremath{\boldsymbol{\mathcal K}}}
\newcommand{\GGG}{\ensuremath{\boldsymbol{\mathcal G}}}
\newcommand{\RR}{\ensuremath{\mathbb R}}
\newcommand{\KK}{\ensuremath{\mathcal K}}
\newcommand{\NN}{\ensuremath{\mathbb N}}
\newcommand{\dom}{\ensuremath{\operatorname{dom}}}
\newcommand{\prox}{\ensuremath{\operatorname{prox}}}
\newcommand{\argmin}{\ensuremath{\operatorname{argmin}}}
\newcommand{\ran}{\ensuremath{\operatorname{ran}}}
\newcommand{\zer}{\ensuremath{\operatorname{zer}}}
\newcommand{\gra}{\ensuremath{\operatorname{gra}}}
\newcommand{\vv}{\ensuremath{\boldsymbol{v}}}
\newcommand{\xx}{\ensuremath{\boldsymbol{x}}}
\newcommand{\yy}{\ensuremath{\boldsymbol{y}}}
\newcommand{\ee}{\ensuremath{\boldsymbol{e}}}
\newcommand{\cc}{\ensuremath{\boldsymbol{c}}}
\newcommand{\dd}{\ensuremath{\boldsymbol{d}}}
\newcommand{\aaa}{\ensuremath{\boldsymbol{a}}}
\newcommand{\ww}{\ensuremath{\boldsymbol{w}}}
\newcommand{\BB}{\ensuremath{\boldsymbol{B}}}
\newcommand{\AAA}{\ensuremath{\boldsymbol{A}}}
\newcommand{\BBB}{\ensuremath{\boldsymbol{B}}}
\newcommand{\SSS}{\ensuremath{\boldsymbol{S}}}
\newcommand{\Id}{\ensuremath{\operatorname{Id}}}
\newcommand{\weakly}{\ensuremath{\rightharpoonup}}
\newtheorem{theorem}{Theorem}[section]
\newtheorem{lemma}[theorem]{Lemma}
\newtheorem{corollary}[theorem]{Corollary}
\newtheorem{definition}[theorem]{Definition}
\theoremstyle{plain}{\theorembodyfont{\rmfamily}
}
\theoremstyle{plain}{\theorembodyfont{\rmfamily}
\newtheorem{condition}[theorem]{Condition}}
\theoremstyle{plain}{\theorembodyfont{\rmfamily}
\newtheorem{algorithm}[theorem]{Algorithm}}
\theoremstyle{plain}{\theorembodyfont{\rmfamily}
\newtheorem{example}[theorem]{Example}}
\theoremstyle{plain}{\theorembodyfont{\rmfamily}
\newtheorem{problem}[theorem]{Problem}}
\theoremstyle{plain}{\theorembodyfont{\rmfamily}
\newtheorem{remark}[theorem]{Remark}}
\theoremstyle{plain}{\theorembodyfont{\rmfamily}
\newtheorem{notation}[theorem]{Notation}}
\definecolor{labelkey}{rgb}{0,0.08,0.45}
\definecolor{refkey}{rgb}{0,0.6,0.0}
\definecolor{Brown}{rgb}{0.45,0.0,0.05}
\definecolor{dgreen}{rgb}{0.00,0.49,0.00}
\definecolor{dblue}{rgb}{0,0.08,0.75}
\numberwithin{equation}{section}
\title{ A primal-dual backward reflected forward splitting algorithm for structured monotone inclusions }
\author{Vu Cong Bang$^1$, Dimitri Papadimitriou$^1$ and
Vu Xuan Nham$^2$  \\[5mm]
$^1$ Belgium Research Center (BeRC), Huawei\\
             3001 Leuven, Belgium\\
 $^2$ University of Transport Technology, Hanoi, Vietnam  \\       
 bangcvvn@gmail.com;dpapadimitriou@3nlab.org; nhamvx@utt.edu.vn} 
\date{}
\begin{document}
\maketitle
\begin{abstract} 
We propose a primal-dual backward reflected forward splitting method  for solving  structured primal-dual monotone inclusion in real Hilbert space. 
The algorithm allows to use the inexact computations of the Lipschitzian and cocoercive operators. 
The strong convergence of the generated iterative sequence is proved under the strong monotonicity condition, whilst the weak convergence is formally proved under several conditioned used in the literature. An application to a structured minimization problem is supported.
\end{abstract}

{\bf Keywords:} 
monotone inclusion, primal-dual algorithm,
monotone operator,
operator splitting,
cocoercive,backward-forward, forward-backward, Lipschitz,
composite operator,
duality

{\bf Mathematics Subject Classifications (2010)}: 47H05, 49M29, 49M27, 90C25 
\section{Introduction}
Primal-dual splitting methods  have been investigated extensively in 
the literature and found many applications in applied mathematics  \cite{Bot2013,Luis15,Buicomb,siop2, plc6,Combettes13,Icip14,Condat2013,sva2,te15,Latafat2018,Pesquet15, Bang13,Bang14,Vu2013}.
The first primal-dual splitting method was proposed in \cite{siop2} 
for the sum of a linearly composed operator and a maximally monotone operator. 
This framework was then extended to both  monotone inclusion and system monotone inclusions with mixtures of composite, Lipschitzian, 
and parallel-sum type monotone operators in \cite{plc6} and \cite{Combettes13,Bang14}, respectively. Further  developments and convergence analysis 
of the framework in \cite{plc6} can be found in \cite{Bot2014}. These methods can be viewed as primal-dual monotone+skew splitting and they have the structure of the forward-backward-forward splitting \cite{Tseng00,siop2}.
When the monotone and Lipschitzian operators in \cite{plc6} are 
restricted to be cocoercive operators, an alternative primal-dual  splitting methods were proposed in \cite{Bang13} and \cite{Icip14} which
have the structure of forward-backward splitting \cite{Lions1979,plc4}. A Douglas-Rachford type primal-dual method for solving monotone inclusions with mixtures of composite and parallel-sum type monotone operators was also proposed in \cite{Bot2013}.
Very recently, these frameworks have been unified into a highly structured multivariate  system of monotone inclusions involving a mix of set-valued, cocoercive, and Lipschitzian monotone operators \cite{Buicomb} where the asynchronous block-iterative outer approximation method is proposed.

 In this paper,  we revisit the following problem which is slightly extension of the framework in \cite{plc6} and it is an instance of the one in \cite{Buicomb}.  We further exploit its duality nature
 by incorporating  the structure of the backward-forward  \cite{Attouch18} and the structure  of forward reflected backward \cite{Malitsky18b}.
 This will lead to a new type of primal-dual splitting method. 
 \begin{problem}
\label{p:primal-dual}
Let $m$ be a strictly positive integer, 
let $(\beta_i)_{0\leq i\leq m}$ and $(\mu_i)_{0\leq i\leq m}$ be in $\left]0,+\infty\right[^{m+1}$.
Let $B\colon\HH\to\HH$ be  $\beta_0$-cocoercive, 
let  $A\colon \HH\to 2^{\HH}$ be maximally monotone, let $Q\colon\HH\to\HH$ be monotone and
 $\mu_0$-Lipschitzian, and let $z\in\HH$.
Let $(\GG_i)_{1\leq i\leq m}$ be real Hilbert spaces. 
For every $i\in\{1,\ldots,m\}$, let $A_i\colon\GG_i\to 2^{\GG_i}$ be  maximally monotone, 
 let $B_i\colon\GG_i\to 2^{\GG_i}$ be   $\beta_i$-cocoercive, let $Q_i\colon\GG_i\to \GG_i$ be monotone 
and $\mu_i$-Lipschitzian, let $r_i\in\GG$, and
 let $L_i\colon\HH\to \GG_i$ be a bounded linear operator such that $0\not=\sum_{i=1}^m\|L_i\|^2$. 
Suppose that 
\begin{equation}
\label{c:1}
z \in \ran\bigg(A+ \sum_{i=1}^m L_{i}^* (A_{i}+ B_i+Q_i)^{-1}( L_i\cdot-r_i) + B +Q \bigg).
\end{equation}
The primal inclusion is to find $\overline{x}$ such that
\begin{equation}
\label{e:pri}
z \in A\overline{x}+ \sum_{i=1}^m L_{i}^* \big(A_{i}+ B_i+Q_i\big)^{-1}  (L_i \overline{x}-r_i)+ B\overline{x}+Q\overline{x},
\end{equation}
and the dual inclusion is to find $(\overline{v}_i)_{1\leq i\leq m} \in (\GG_i)_{1\leq i\leq m}$ such that 
\begin{equation}
\label{e:dual}
\big(\forall i\in\{1,\ldots,m\}\big) \; -r_i \in -L_i(A+B +Q)^{-1}\bigg(z-\sum_{i=1}^m L_{i}^*\overline{v}_i\bigg) + A_{i} \overline{v}_i+ B_{i} \overline{v}_i 
+Q_{i} \overline{v}_i.
\end{equation}
\end{problem}
When every Lipschitzian operator $Q$ and 
$(Q_i)_{1\leq i\leq m}$ is zero, Problem \ref{p:primal-dual} reduces to the one in \cite{Vu2013}. When every cocoercive operator $B$ and 
$(B_i)_{1\leq i\leq m}$ is zero,   Problem \ref{p:primal-dual} reduces to the problem-dual problem in \cite{plc6}. Further special cases of this primal-dual framework can be found in \cite{plc6,Bot2013,Vu2013,Pesquet15} as well as in the recent work \cite{Buicomb}.

In Section \ref{s:background}, we recall some basic notions in convex analysis and monotone operator theory. The proposed algorithm as well as convergence results are presented in Section \ref{s:Algorithm}. We provide an application to a structured minimization problem in Section \ref{s:Application}.

\section{Notation and Background}
\label{s:background}
We recall the standard notation in convex analysis and monotone operator theory  in \cite{livre1}.
 The scalar products and the  associated 
norms of all Hilbert spaces used in this paper are denoted respectively by 
$\scal{\cdot}{\cdot}$ and $\|\cdot\|$. 
The symbols $\weakly $ and $\to$ denote respectively 
weak and strong convergence.
Let $A\colon\HH\to 2^{\HH}$ be a set-valued operator. The domain of $A$ is denoted by $\dom(A)$ that 
is a set of all $x\in\HH$ such that $Ax\not= \emp$. The range of $A$ is $\ran(A) = \menge{u\in\HH}{(\exists x\in\HH) u\in Ax }$. 
The graph of $A$ is 
$\gra(A) = \menge{(x,u)\in\HH\times\HH}{u\in Ax}$. 
The inverse of $A$ 
is $A^{-1}\colon u \mapsto \menge{x}{ u\in Ax}$. 
The zero set of $A$ is $\zer(A) = A^{-1}0$.
\begin{definition}  We have the following definitions:
\begin{enumerate}
\item We say that $A\colon\HH\to 2^\HH$ is  monotone if 
\begin{equation}\label{oioi}
\big(\forall (x,u)\in \gra A\big)\big(\forall (y,v)\in\gra A\big)
\quad\scal{x-y}{u-v}\geq 0.
\end{equation}
\item We say that $A\colon\HH\to 2^\HH$ is maximally monotone if it is monotone and  there exists no monotone operator $B$ such that $\gra(B)$ properly contains $\gra(A)$.
\item We say that  $A\colon\HH\to 2^\HH$  is uniformly monotone at $x\in\dom (A)$, if there exists an   increasing function $\phi_A\colon\left[0,\infty\right[\to \left[0,\infty\right]$ that vanishes only at $0$ such that 
 \begin{equation}\label{unif}
\big(\forall u\in Ax\big)\big(\forall (y,v)\in\gra A\big)
\quad\scal{x-y}{u-v}\geq \phi_A(\|y-x\|).
\end{equation}
Alternatively, $A$ is called $\phi_A$-uniformly monotone with mudulus function $\phi_A$. 
\item The resolvent of $A$ and the Yosida approximation of $A$ of index $\gamma\in\RR$ are respectively defined by
\begin{equation}
 J_A=(\Id + A)^{-1}\;  \text{and} \; A_{\gamma} = (\gamma \Id +A^{-1})^{-1},
\end{equation}
where $\Id$ denotes the identity operator on $\HH$.  
\item A single-valued operator $B\colon\HH\to\HH$ is $\beta$-cocoercive, for some $\beta\in \left]0,+\infty\right[$, if
\begin{equation}
(\forall (x,y)\in\HH^2)\; \scal{x-y}{Bx-By} \geq \beta\|Bx-By\|^2.
\end{equation}  
If $\beta=1$, then $B$ is a firmly non-expansive operator.
\end{enumerate}
\end{definition}
The class of all lower semicontinuous convex functions 
$f\colon\HH\to\left]-\infty,+\infty\right]$ such 
that $\dom f=\menge{x\in\HH}{f(x) < +\infty}\neq\emp$ 
is denoted by $\Gamma_0(\HH)$. 
\begin{definition}  Let $f\in\Gamma_0(\HH)$.  We  recall the following notions in convex analysis.
\begin{enumerate}
\item 
The conjugate of $f$ is the function $f^*\in\Gamma_0(\HH)$ defined by
$f^*\colon u\mapsto
\sup_{x\in\HH}(\scal{x}{u} - f(x))$,
\item The subdifferential 
of $f$ is the maximally monotone operator 
\begin{equation}
 \partial f\colon\HH\to 2^{\HH}\colon x
\mapsto\menge{u\in\HH}{(\forall y\in\HH)\quad
\scal{y-x}{u} + f(x) \leq f(y)}
\end{equation} 
with inverse given by
\begin{equation}
(\partial f)^{-1}=\partial f^*.
\end{equation}
\item
The proximity operator of $f$ is
\begin{equation}
\label{e:prox}
\prox_f=J_{\partial f} \colon\HH\to\HH\colon x
\mapsto\underset{y\in\HH}{\argmin}\: f(y) + \frac12\|x-y\|^2.
\end{equation}
\item
 The infimal convolution of the two functions $\ell$ and $g$ from $\HH$ to $\left]-\infty,+\infty\right]$ is 
\begin{equation}
 \ell\;\vuo\; g\colon x \mapsto \inf_{y\in\HH}(\ell(y)+g(x-y)).
\end{equation}
\end{enumerate}
\end{definition}
\begin{notation}\label{n:main} In the setting of Problem \ref{p:primal-dual},
we denote $\KKK = \HH\oplus\GG_1\oplus\ldots\oplus\GG_m$  the Hilbert direct sum of the Hilbert spaces $\HH$ and $(\GG_i)_{1\leq i\leq m}$, where the scalar product and the  associated norm of $\KK$ are respectively defined as, for any  $(x,\vv) = (x,v_1,\ldots, v_m) \in \KKK$ and $(y,\ww) = (y,w_1,\ldots, w_m) \in \KKK$,
 \begin{equation}
\big((x,\vv), (y,\ww)\big) \mapsto \scal{x}{y} + \sum_{i=1}^m\scal{v_i}{w_i},
 \end{equation}
 and 
 \begin{equation}
 (x,\vv) \mapsto \sqrt{\|x\|^2 +\sum_{i=1}^m\|v_i\|^2}.
 \end{equation}
 The space $\GGG= \GG_1\oplus\ldots\oplus\GG_m$ is defined by the same fashion. We also use the following operators:
 \begin{equation}
 \begin{cases}
\SSS \colon \KKK\to\KKK \colon (x,v_1,\ldots, v_m) \mapsto  (Qx+\sum_{i=1}^m L_{i}^*v_i, Q_1v_1 -L_1x, \ldots, Q_mv_m -L_m x ),\\
 \BB \colon \KKK\to\KKK \colon (x,v_1,\ldots, v_m) \mapsto (Bx, B_{1}v_1,\ldots,  B_{m}v_m),\\
 \AAA\colon \KKK\to 2^{\KKK}\colon (x,v_1,\ldots, v_m) \mapsto (-z+Ax, r_1+ A_{1}v_1, \ldots, r_m+A_mv_m).
 \end{cases}
 \end{equation}
\end{notation}

\begin{lemma}{\rm \cite[Eqs.(3.12),(3.13),(3.21),(3.22)]{plc6}} \label{l:2}
 Suppose that the condition \eqref{c:1} is satisfied. Then, $\zer(\AAA+\SSS+\BB)\not=\emp$. Furthermore, 
  \begin{equation}\label{e:dss}
  (\overline{x}, \overline{v}_1,\ldots, \overline{v}_m) \in\zer(\AAA+\SSS+\BB) \Rightarrow \overline{x}\; \text{solves}\; \eqref{e:pri}\; \text{and}\;   ( \overline{v}_1,\ldots, \overline{v}_m)\;
   \text{solves}\; \eqref{e:dual}.
  \end{equation} 
\end{lemma}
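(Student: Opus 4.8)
The plan is to unfold the definition of $\zer(\AAA+\SSS+\BB)$ componentwise and match it with the primal and dual inclusions. Fix $(\xo,\overline{v}_1,\ldots,\overline{v}_m)\in\zer(\AAA+\SSS+\BB)$. By the definitions of $\AAA$, $\SSS$, $\BB$ in Notation~\ref{n:main}, this means
\begin{equation}
\label{e:sys}
\begin{cases}
0 \in -z + A\xo + Q\xo + B\xo + \sum_{i=1}^m L_i^*\overline{v}_i,\\
(\forall i\in\{1,\ldots,m\})\quad 0 \in r_i + A_i\overline{v}_i + Q_i\overline{v}_i + B_i\overline{v}_i - L_i\xo.
\end{cases}
\end{equation}
For the bottom line, rewrite it as $L_i\xo - r_i \in (A_i+B_i+Q_i)\overline{v}_i$, equivalently $\overline{v}_i\in(A_i+B_i+Q_i)^{-1}(L_i\xo-r_i)$; the point here is that $A_i+B_i+Q_i$ need not be maximally monotone, but its inverse is still a well-defined set-valued operator, so this equivalence is purely formal. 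Substituting the resulting inclusion $\sum_{i=1}^m L_i^*\overline{v}_i \in \sum_{i=1}^m L_i^*(A_i+B_i+Q_i)^{-1}(L_i\xo-r_i)$ back into the top line of \eqref{e:sys} yields
\begin{equation}
z \in A\xo + \sum_{i=1}^m L_i^*(A_i+B_i+Q_i)^{-1}(L_i\xo-r_i) + B\xo + Q\xo,
\end{equation}
which is exactly \eqref{e:pri}; hence $\xo$ solves the primal inclusion.

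For the dual claim, I would go back to the top line of \eqref{e:sys} and read it as $z - \sum_{i=1}^m L_i^*\overline{v}_i \in (A+B+Q)\xo$, i.e.\ $\xo \in (A+B+Q)^{-1}\big(z-\sum_{i=1}^m L_i^*\overline{v}_i\big)$ (again a formal inverse, no maximal monotonicity needed). Applying $L_i$ and substituting into the bottom line of \eqref{e:sys}, which reads $L_i\xo - r_i \in (A_i+B_i+Q_i)\overline{v}_i$, gives for each $i$
\begin{equation}
-r_i \in -L_i(A+B+Q)^{-1}\bigg(z-\sum_{j=1}^m L_j^*\overline{v}_j\bigg) + A_i\overline{v}_i + B_i\overline{v}_i + Q_i\overline{v}_i,
\end{equation}
which is \eqref{e:dual}; hence $(\overline{v}_1,\ldots,\overline{v}_m)$ solves the dual inclusion. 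The nonemptiness of $\zer(\AAA+\SSS+\BB)$ then follows because, by hypothesis \eqref{c:1}, there exists $\xo$ satisfying \eqref{e:pri}; choosing for each $i$ some $\overline{v}_i\in(A_i+B_i+Q_i)^{-1}(L_i\xo-r_i)$ realizing the membership $z - B\xo - Q\xo - A\xo \ni \sum_i L_i^*\overline{v}_i$ produces a zero, so the two displayed systems above are reversible.

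I do not anticipate a genuine obstacle here: the statement is essentially a bookkeeping identity, since \eqref{c:1} was tailored precisely so that the range condition guarantees a solution of \eqref{e:pri}, and the equivalences used are all of the form ``$u\in Mx \iff x\in M^{-1}u$,'' valid for arbitrary set-valued $M$. The only point requiring a little care is to make sure one never implicitly assumes maximal monotonicity of the partial sums $A_i+B_i+Q_i$ (which may fail), so every manipulation must stay at the level of graphs and inverses rather than resolvents. Since the cited reference \cite[Eqs.(3.12),(3.13),(3.21),(3.22)]{plc6} already records these identities in the special case where the Lipschitzian parts are absent, the proof amounts to checking that inserting $Q$ and the $Q_i$ changes nothing in the algebra — which the computation above confirms.
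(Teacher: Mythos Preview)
Your argument is correct and is exactly the componentwise unfolding that underlies the cited equations in \cite{plc6}; the paper itself offers no independent proof of this lemma but simply defers to that reference. There is nothing to add: the implication \eqref{e:dss} is pure bookkeeping with set-valued inverses, and your nonemptiness argument (pick $\xo$ realizing \eqref{c:1}, then pick the $\overline{v}_i$'s realizing the sum) is the standard one.
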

\begin{lemma} {\rm \cite[Proposition 2.4(ii)\&(iii)]{Attouch18}} \label{l:1} The operator 
\begin{equation}
\Id-\gamma \SSS-\gamma\BB \colon \zer(\AAA+\BB+\SSS)\to \zer((\BB+\SSS)_{-\gamma} +\AAA_{\gamma}) 
\end{equation}
is a bijection with inverse $J_{\gamma \AAA}$. Furthermore, 
\begin{equation}
 \zer((\BB+\SSS)_{-\gamma} +\AAA_{\gamma})  = \zer( (\BB+\SSS) J_{\gamma \AAA} +\AAA_{\gamma}).
\end{equation}
\end{lemma}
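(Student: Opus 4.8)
The plan is to abbreviate $\MM:=\BB+\SSS$, so that the statement becomes a fact about the generic pair $(\AAA,\MM)$ consisting of a maximally monotone operator $\AAA$ and a single-valued operator $\MM$ (monotonicity of $\MM$ is not even used here), and to reduce everything to four standard facts about the resolvent and Yosida approximation of $\AAA$: (a) $\AAA_\gamma$ is single-valued with full domain (since $\AAA^{-1}$ is maximally monotone, $\gamma\Id+\AAA^{-1}$ is $\gamma$-strongly maximally monotone, hence bijective with single-valued inverse); (b) $\AAA_\gamma=\gamma^{-1}(\Id-J_{\gamma\AAA})$; (c) $y=J_{\gamma\AAA}y+\gamma\AAA_\gamma y$ for every $y$; (d) $\AAA_\gamma y\in\AAA(J_{\gamma\AAA}y)$ for every $y$. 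With these in hand the argument is bookkeeping; the one point needing care is that $\MM_{-\gamma}=(-\gamma\Id+\MM^{-1})^{-1}$ is genuinely set-valued (because $\MM$ need not be injective and $-\gamma\Id+\MM^{-1}$ is not monotone), so its membership relations must be unwound through the definition of the inverse, not treated as equalities.

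\emph{Forward direction.} Let $x\in\zer(\AAA+\MM)$, i.e. $-\MM x\in\AAA x$, and set $y:=(\Id-\gamma\MM)x$. From $-\gamma\MM x=y-x\in\gamma\AAA x$ we get $x=J_{\gamma\AAA}y$, whence (b)--(c) give $\AAA_\gamma y=\gamma^{-1}(y-x)=-\MM x$. Moreover $x\in\MM^{-1}(\MM x)$ gives $y=-\gamma\MM x+x\in(-\gamma\Id+\MM^{-1})(\MM x)$, i.e. $\MM x\in\MM_{-\gamma}y$; adding this to $\AAA_\gamma y=-\MM x$ yields $0\in(\MM_{-\gamma}+\AAA_\gamma)y$. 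Thus $\Id-\gamma\MM$ maps $\zer(\AAA+\MM)$ into $\zer(\MM_{-\gamma}+\AAA_\gamma)$, and we have recorded $x=J_{\gamma\AAA}\big((\Id-\gamma\MM)x\big)$.

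\emph{Backward direction.} Let $y\in\zer(\MM_{-\gamma}+\AAA_\gamma)$. By single-valuedness (a), $-\AAA_\gamma y\in\MM_{-\gamma}y$, so $y\in(-\gamma\Id+\MM^{-1})(-\AAA_\gamma y)$, i.e. there is $w\in\MM^{-1}(-\AAA_\gamma y)$ with $w=y-\gamma\AAA_\gamma y$, which by (c) equals $J_{\gamma\AAA}y=:x$. Hence $\MM x=-\AAA_\gamma y$ (single-valued), and by (d) $-\MM x=\AAA_\gamma y\in\AAA(J_{\gamma\AAA}y)=\AAA x$, so $x\in\zer(\AAA+\MM)$; finally $(\Id-\gamma\MM)x=x+\gamma\AAA_\gamma y=J_{\gamma\AAA}y+\gamma\AAA_\gamma y=y$. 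So $J_{\gamma\AAA}$ maps $\zer(\MM_{-\gamma}+\AAA_\gamma)$ into $\zer(\AAA+\MM)$ and is a two-sided inverse of $\Id-\gamma\MM$ between these sets, which is the bijection claim.

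For the ``furthermore'' part, the two directions above show $y\in\zer(\MM_{-\gamma}+\AAA_\gamma)$ if and only if $\MM(J_{\gamma\AAA}y)+\AAA_\gamma y=0$: ``$\Rightarrow$'' is the identity $\MM x=\MM(J_{\gamma\AAA}y)=-\AAA_\gamma y$ from the forward step, and the backward step is reversible, since if $\MM(J_{\gamma\AAA}y)=-\AAA_\gamma y$ then with $x:=J_{\gamma\AAA}y=y-\gamma\AAA_\gamma y$ one has $x\in\MM^{-1}(-\AAA_\gamma y)$, hence $y\in(-\gamma\Id+\MM^{-1})(-\AAA_\gamma y)$, i.e. $-\AAA_\gamma y\in\MM_{-\gamma}y$ and $0\in(\MM_{-\gamma}+\AAA_\gamma)y$. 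Since $\MM J_{\gamma\AAA}+\AAA_\gamma$ is single-valued with full domain, its zero set is precisely $\{y:\MM(J_{\gamma\AAA}y)+\AAA_\gamma y=0\}$, so it coincides with $\zer(\MM_{-\gamma}+\AAA_\gamma)$. I expect the only mild obstacle to be handling the set-valued inverse $\MM_{-\gamma}$ correctly throughout; no topology, completeness, or convergence argument is needed.
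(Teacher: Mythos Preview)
Your proof is correct. The paper does not prove this lemma at all; it simply cites \cite[Proposition~2.4(ii)\&(iii)]{Attouch18} for the result, so there is no in-paper argument to compare against. Your reconstruction via the four resolvent/Yosida identities (a)--(d) is exactly the natural route and matches what one finds in the cited reference. One cosmetic point: in the ``furthermore'' paragraph, the identity $\MM(J_{\gamma\AAA}y)=-\AAA_\gamma y$ for an arbitrary $y\in\zer(\MM_{-\gamma}+\AAA_\gamma)$ is what you established in your \emph{backward} step (where $y$ is given and $x:=J_{\gamma\AAA}y$), not the forward one; the mathematics is unaffected, only the cross-reference label is off.
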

The convergence analysis of this paper is relied on  the forward-reflected-backward in \cite{Malitsky18b}.
\begin{lemma}\label{l:M-T}
 Let $\beta$ and $\mu$ be in $\left]0,+\infty\right[$.
Let $B\colon\HH\to\HH$ be  $\beta$-cocoercive, 
let  $A\colon \HH\to 2^{\HH}$ be maximally monotone, let $Q\colon\HH\to\HH$ be monotone and
 $\mu$-Lipschitzian such that $\zer(A+B+Q) \not=\emp$. Let 
 $\gamma \in\left]0,+\infty\right[$ such that $\gamma < 2\beta/(1+4\beta\mu)$. 
 Let $x_{-1} $ and $x_0$ be in $\HH$. Set $y_{-1} = J_{\gamma A}x_{-1}$ and iterate 
 \begin{equation}
\label{algo:primafbl}
(\forall n\in\NN)\quad
\begin{array}{l}
\left\lfloor
\begin{array}{l}
y_n = J_{\gamma A} x_n\\
x_{n+1} = y_{n} - \gamma (2Q y_{n}- Qy_{n-1} )-\gamma By_{n}.
\end{array}
\right.\\[2mm]
\end{array}
\end{equation}
Then, the sequence $(y_n)_{n\in\NN}$ converges weakly to a point $\overline{x} \in zer(A+B+Q)$. 
 \end{lemma}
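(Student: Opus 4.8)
The plan is to absorb the auxiliary variable into $(y_n)_{n\in\NN}$: substituting $x_{n+1}=y_n-\gamma(2Qy_n-Qy_{n-1})-\gamma By_n$ into $y_{n+1}=J_{\gamma A}x_{n+1}$ shows that \eqref{algo:primafbl} is equivalent to $y_{n+1}=J_{\gamma A}(y_n-2\gamma Qy_n+\gamma Qy_{n-1}-\gamma By_n)$ for $n\in\NN$ (well defined since $y_{-1}$ and $y_0$ are given), i.e.\ a forward--reflected--backward recursion in the sense of \cite{Malitsky18b} with an extra forward cocoercive term $B$. From the definition of the resolvent, $w_{n+1}:=\gamma^{-1}(y_n-y_{n+1})+(Qy_{n+1}-Qy_n)-(Qy_n-Qy_{n-1})-By_n\in(A+Q)y_{n+1}$, and adding $By_{n+1}$ the residual $v_{n+1}:=w_{n+1}+By_{n+1}$ lies in $(A+B+Q)y_{n+1}$. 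Fix $\overline{x}\in\zer(A+B+Q)$, so that $-B\overline{x}\in(A+Q)\overline{x}$. Monotonicity of $A+Q$ at $(y_{n+1},w_{n+1})$ and $(\overline{x},-B\overline{x})$ gives $\scal{y_{n+1}-\overline{x}}{w_{n+1}+B\overline{x}}\ge0$; multiplying by $\gamma$ and estimating $\gamma\scal{y_{n+1}-\overline{x}}{By_n-B\overline{x}}\ge\gamma\beta\norm{By_n-B\overline{x}}^2+\gamma\scal{y_{n+1}-y_n}{By_n-B\overline{x}}$ with the $\beta$-cocoercivity of $B$ yields
\begin{equation*}
\gamma\beta\norm{By_n-B\overline{x}}^2+\gamma\scal{y_{n+1}-y_n}{By_n-B\overline{x}}\le\scal{y_{n+1}-\overline{x}}{y_n-y_{n+1}}+\gamma\scal{y_{n+1}-\overline{x}}{Qy_{n+1}-Qy_n}-\gamma\scal{y_{n+1}-\overline{x}}{Qy_n-Qy_{n-1}}.
\end{equation*}
Putting $\phi_n:=\tfrac12\norm{y_n-\overline{x}}^2-\gamma\scal{y_n-\overline{x}}{Qy_n-Qy_{n-1}}$, expanding the first inner product on the right by polarization and splitting $\scal{y_{n+1}-\overline{x}}{Qy_n-Qy_{n-1}}$ as $\scal{y_n-\overline{x}}{Qy_n-Qy_{n-1}}+\scal{y_{n+1}-y_n}{Qy_n-Qy_{n-1}}$, this becomes
\begin{equation*}
\phi_{n+1}\le\phi_n-\tfrac12\norm{y_{n+1}-y_n}^2-\gamma\scal{y_{n+1}-y_n}{Qy_n-Qy_{n-1}}-\gamma\beta\norm{By_n-B\overline{x}}^2-\gamma\scal{y_{n+1}-y_n}{By_n-B\overline{x}}.
\end{equation*}

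The next step is to estimate the two cross terms by Young's inequality. With weight $\beta$, $-\gamma\scal{y_{n+1}-y_n}{By_n-B\overline{x}}\le\tfrac{\gamma}{4\beta}\norm{y_{n+1}-y_n}^2+\gamma\beta\norm{By_n-B\overline{x}}^2$, which exactly cancels the cocoercivity term; and by the $\mu$-Lipschitz continuity of $Q$, $-\gamma\scal{y_{n+1}-y_n}{Qy_n-Qy_{n-1}}\le\tfrac{\gamma\mu}{2}\norm{y_{n+1}-y_n}^2+\tfrac{\gamma\mu}{2}\norm{y_n-y_{n-1}}^2$. Hence, with the Lyapunov sequence $\Phi_n:=\phi_n+\tfrac{\gamma\mu}{2}\norm{y_n-y_{n-1}}^2$,
\begin{equation*}
\Phi_{n+1}\le\Phi_n-\Big(\tfrac12-\gamma\mu-\tfrac{\gamma}{4\beta}\Big)\norm{y_{n+1}-y_n}^2,
\end{equation*}
and $\delta:=\tfrac12-\gamma\mu-\tfrac{\gamma}{4\beta}>0$ precisely because $\gamma<2\beta/(1+4\beta\mu)$. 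Moreover, one more Young estimate together with $\gamma\mu<1/2$ (which follows from $\gamma<2\beta/(1+4\beta\mu)<1/(2\mu)$) gives $\Phi_n\ge\tfrac14\norm{y_n-\overline{x}}^2\ge0$.

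It then remains to draw the usual conclusions. Since $(\Phi_n)_{n\in\NN}$ is nonincreasing and bounded below it converges, $\sum_n\norm{y_{n+1}-y_n}^2<\infty$, hence $\norm{y_{n+1}-y_n}\to0$, and $(y_n)_{n\in\NN}$ is bounded. Because $\norm{y_{n+1}-y_n}\to0$ and $Q$ is Lipschitz, both correction terms in $\Phi_n$ tend to $0$, so $\tfrac12\norm{y_n-\overline{x}}^2$ has the same limit as $\Phi_n$ and therefore $\lim_n\norm{y_n-\overline{x}}$ exists for every $\overline{x}\in\zer(A+B+Q)$. Furthermore $v_{n+1}\to0$ strongly, since $\norm{y_{n+1}-y_n}\to0$, $\norm{y_n-y_{n-1}}\to0$ and $Q$, $B$ are Lipschitz. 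As $A+Q+B$ is maximally monotone (sum of the maximally monotone $A$ and the monotone, Lipschitz, full-domain operator $Q+B$), its graph is closed for the weak$\times$strong topology on $\HH\times\HH$, so every weak sequential cluster point $\overline{y}$ of the bounded sequence $(y_n)_{n\in\NN}$ satisfies $0\in(A+Q+B)\overline{y}$, i.e.\ $\overline{y}\in\zer(A+B+Q)$. Opial's lemma then delivers the weak convergence of $(y_n)_{n\in\NN}$ to a point of $\zer(A+B+Q)$. The step I expect to be most delicate is the design of $\Phi_n$ and the careful bookkeeping of the two cross terms: choosing exactly the weight $\beta$ for the cocoercive contribution collapses the coefficient of $\norm{y_{n+1}-y_n}^2$ to $\tfrac12-\gamma\mu-\tfrac{\gamma}{4\beta}$ and thereby forces the step-size bound $\gamma<2\beta/(1+4\beta\mu)$, while establishing the lower bound $\Phi_n\ge\tfrac14\norm{y_n-\overline{x}}^2$ hinges on the auxiliary observation $\gamma\mu<1/2$.
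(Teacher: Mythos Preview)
Your proof is correct. The paper does not actually prove Lemma~\ref{l:M-T}; it is stated as a citation of the forward--reflected--backward result of Malitsky--Tam \cite{Malitsky18b}. That said, your argument is precisely the specialization to a single space of the paper's own proof of Theorem~\ref{t:1} (with $\kappa_n\equiv0$): the paper uses firm nonexpansiveness of $J_{\gamma\AAA}$ where you use monotonicity of $A+Q$ (equivalent), builds the same telescoping term $\scal{\yy_{n}-\xx}{\SSS\yy_{n-1}-\SSS\yy_n}$ that plays the role of your $-\gamma\scal{y_n-\overline{x}}{Qy_n-Qy_{n-1}}$, and arrives at the identical coefficient $1-\gamma/(2\beta)-2\gamma\mu$ in front of $\|y_{n+1}-y_n\|^2$, which yields exactly the step-size bound $\gamma<2\beta/(1+4\beta\mu)$. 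Your Lyapunov function $\Phi_n$ is the scalar version of the quantity in \eqref{eq:317aa}. So the approaches coincide.
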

 
\section{Main results}
\label{s:Algorithm}
In view of Lemma \ref{l:2}, to solve Problem \ref{p:primal-dual}, it is natural to consider the problem of finding a point
\begin{equation}\label{e:3operators}
 (\overline{x}, \overline{v}_1,\ldots, \overline{v}_m) \in\zer(\AAA+\SSS+\BB),
 \end{equation}
 where $\AAA$, $\SSS$ and $\BB$ are defined by Notation \ref{n:main}. 
 We investigate the weak and strong convergence of Algorithm \ref{algo:main}, 
 which shares the same structure as
  the algorithm \eqref{algo:primafbl} applied to
the problem \eqref{e:3operators}. However, at each iteration, the algorithm uses the approximation of the single valued operators $B,Q,  (B_{i})_{1\leq i\leq m}$ and $(Q_{i})_{1\leq i\leq m}$.
\begin{algorithm} \label{algo:main}
For every $n\in\NN$,  let $B_n\colon \HH\to\HH$ and $Q_n\colon\HH\to \HH$.
For every $i\in\{1,\ldots m\}$,
 let $B_{i,n}\colon \GG_i\to\GG_i$ and $Q_{i,n}\colon\GG_i\to \GG_i$.
Let $\gamma$ be in $\left]0,\infty\right[$,
let $x_{-1}\in\HH$, $y_{-1} = J_{\gamma A}x_{-1} $, 
and,  for every $i\in\{1,\ldots,m\}$, let $v_{i,-1} \in\GG_i$ and $w_{i,-1}= J_{\gamma A}v_{i,-1}$.
Let $x_{0}\in\HH$ and,  for every $i\in\{1,\ldots,m\}$, let $v_{i,0} \in\GG_i$. Iterate 
\begin{equation}
\label{algo:primal-dual}
(\forall n\in\NN)\quad
\begin{array}{l}
\left\lfloor
\begin{array}{l}
y_n = J_{\gamma  A} (x_n+\gamma  z)\\
\operatorname{For \; \text{$i =1,\ldots,m$}}\\
\begin{array}{l}
\left\lfloor
\begin{array}{l}
w_{i,n} = J_{\gamma  A_{i}} (v_{i,n}-\gamma  r_i)\\
\end{array}
\right.\\[2mm]
\end{array}\\
x_{n+1} = y_{n}- \gamma \sum_{i=1}^m L_{i}^*(2w_{i,n}-w_{i,n-1}) - \gamma (2Q_n y_{n}- Q_ny_{n-1} )-\gamma B_ny_{n}\\
\operatorname{For \; \text{$i =1,\ldots,m$}}\\

\begin{array}{l}
\left\lfloor
\begin{array}{l}
v_{i,n+1} = w_{i,n} -\gamma (2Q_{i,n} w_{i,n}- Q_{i,n}w_{i,n-1} ) + \gamma L_i(2y_{n}-y_{n-1}) -\gamma B_{i,n} w_{i,n}.\\
\end{array}
\right.\\[2mm]
\end{array}\\
\end{array}
\right.\\[2mm]
\end{array}
\end{equation}
\end{algorithm} 
The convergence of the proposed algorithm relies on the following additional conditions which was introduced in \cite{Attouch10}.
\begin{condition} \label{c:new}
Assume that the operators $ B_n,Q_n, (B_{i,n})_{1\leq i\leq m}$ and $(Q_{i,n})_{1\leq i\leq m}$ verify the following conditions:
\begin{enumerate}
\item 
The operators $B_n-B$ and $Q_n-Q$ are $\kappa_{0,n}$-Lipschitz continuous with $\sum_{n\in\NN}\kappa_{0,n} <+\infty$ and 
\begin{equation}
    (\exists (c_0, d_0))\in \HH^2)(\forall n\in\NN) \; B_nc_0= Bc_0\; \text{and}\; Q_nd_0= Qd_0.  
\end{equation}
\item For every $i\in \{1,\ldots,m\}$, 
the operators $B_{i,n}-B_i$ and $Q_{i,n}-Q_i$ are $\kappa_{i,n}$-Lipschitz continuous with $\sum_{n\in\NN}\kappa_{i,n} <+\infty$, and 
\begin{equation}
    (\forall i\in\{1,\ldots,m\})(\exists (c_{0}, d_0))\in \GG_{i}^2)(\forall n\in\NN) \; B_{i,n}c_{i}= B_ic_i\; \text{and}\; Q_{i,n}d_i= Q_id_i.  
\end{equation}
\end{enumerate}
\end{condition}
\noindent  The main result of this paper are stated in the following theorems in which 
the weak convergence of $(y_n)_{n\in\NN}$ and  $(w_{1,n},\ldots, w_{m,n})_{n\in\NN}$ to 
 solutions to \eqref{e:pri} and \eqref{e:dual}, respectively, are obtained. 
Moreover, the strong convergence is also proved under the uniform monotonicity condition.
\begin{theorem} \label{t:1} 
Under the setting of Problem \ref{p:primal-dual} as well as 
Condition \ref{c:new},
let $\beta$, $\mu$ and $(\kappa_n)_{n\in\NN}$ be such that
\begin{equation}\label{eq:310}
0 < \beta <  \beta'=\min_{0\leq i\leq m} \beta_i\; \text{and}\; 
\mu =\sqrt{\sum_{i=1}^m\|L_i\|^2}+ \max_{0\leq i\leq m} \mu_i, (\forall n\in\NN)\; \kappa_n=\sqrt{\sum_{i=0}^m\kappa_{i,n}^2 },
\end{equation}
and let $\gamma, \epsilon$ be such that 
\begin{equation}\label{e:gam}
(1-\frac{\gamma}{2\beta}-2\gamma\mu-6\gamma\kappa_n -\gamma \kappa_{n+1}) \geq \epsilon > 0.
\end{equation}
Let $(y_n)_{n\in\NN}$ and $(w_{1,n},\ldots,w_{m,n})_{n\in\NN}$ be 
sequences generated by Algorithm \ref{algo:main}.
Then, 
\begin{enumerate}
\item\label{t:i} $\sum_{k\in\NN}(\|y_n-y_{n+1}\|^2+\sum_{i=1}^m \|w_{i,n}-w_{i,n+1}\|^2) < +\infty$.
\item \label{t:ii} The sequence $(y_n)_{n\in\NN}$ converges weakly to $\overline{x}$ that solves the primal inclusion \eqref{e:pri}.
 For every $i\in\{1,\ldots,m\}$, the sequence $(w_{i,n})_{n\in\NN}$ converges weakly to $\overline{v}_i$. 
Moreover, $(\overline{v}_1,\ldots, \overline{v}_m)$ solves the dual inclusion \eqref{e:dual}.
\item 
\label{t:iii} $\sum_{n\in\NN}(\|By_n-B\overline{x} \|^2+\|B_iw_{i,n}-B_i\overline{v}_i\|^2) <+\infty. 
$
\end{enumerate}
\end{theorem}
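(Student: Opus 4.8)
The plan is to reduce Theorem \ref{t:1} to an application of the inexact forward-reflected-backward method (Lemma \ref{l:M-T} together with its inexact/summable-error version) to the three-operator problem \eqref{e:3operators} in the product space $\KKK$, and then transfer the conclusions back to the primal and dual variables via Lemma \ref{l:2}. First I would verify that Algorithm \ref{algo:main} is \emph{exactly} the iteration \eqref{algo:primafbl} applied to $\AAA$, $\SSS$, $\BB$ from Notation \ref{n:main}, up to a summable perturbation. Writing $\xx_n=(x_n,v_{1,n},\ldots,v_{m,n})$ and $\yy_n=(y_n,w_{1,n},\ldots,w_{m,n})$, the resolvent step $\yy_n=J_{\gamma\AAA}(\xx_n)$ unpacks coordinatewise into $y_n=J_{\gamma A}(x_n+\gamma z)$ and $w_{i,n}=J_{\gamma A_i}(v_{i,n}-\gamma r_i)$, and the update $\xx_{n+1}=\yy_n-\gamma(2\SSS\yy_n-\SSS\yy_{n-1})-\gamma\BB\yy_n$ unpacks, using the explicit forms of $\SSS$ and $\BB$, into precisely the two ``For $i=1,\ldots,m$'' blocks of \eqref{algo:primal-dual} — with $B_n,Q_n,B_{i,n},Q_{i,n}$ in place of $B,Q,B_i,Q_i$. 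So the algorithm is \eqref{algo:primafbl} with $\BB,\SSS$ replaced by iteration-dependent approximations $\BB_n,\SSS_n$.

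**Next** I would collect the structural facts about $\AAA$, $\SSS$, $\BB$: $\AAA$ is maximally monotone (a coordinatewise sum of translations of maximally monotone operators), $\BB$ is $\beta'$-cocoercive with $\beta'=\min_i\beta_i$ (coordinatewise cocoercivity), and $\SSS$ is monotone and $\mu$-Lipschitzian with $\mu=\sqrt{\sum_i\|L_i\|^2}+\max_i\mu_i$ — the skew part contributed by the $L_i$ has norm $\sqrt{\sum_i\|L_i\|^2}$ and the block-diagonal part $(Q,Q_1,\ldots,Q_m)$ has norm $\max_i\mu_i$; monotonicity of $\SSS$ follows since the skew coupling is energy-conserving and each $Q_i$ is monotone. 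These are exactly the quantities in \eqref{eq:310}. Similarly, Condition \ref{c:new} says $\SSS_n-\SSS$ and $\BB_n-\BB$ are $\kappa_n$-Lipschitz with $\kappa_n=\sqrt{\sum_i\kappa_{i,n}^2}$ and $\sum_n\kappa_n<\infty$, and that there are fixed points $\cc_0,\dd_0$ on which the errors vanish (ensuring the perturbed operators do not drift to infinity).

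**The analytical core** is then a Lyapunov/energy argument mirroring Malitsky--Tam, but carried out in $\KKK$ and tracking the error terms. Fix $\barx=(\overline{x},\overline{v}_1,\ldots,\overline{v}_m)\in\zer(\AAA+\SSS+\BB)$, which is nonempty by Lemma \ref{l:2}. Using $\yy_n=J_{\gamma\AAA}\xx_n$ one writes the inclusion $\gamma^{-1}(\xx_n-\yy_n)\in\AAA\yy_n$, subtracts the corresponding relation at $\barx$, and uses monotonicity of $\AAA$, cocoercivity of $\BB$, monotonicity and Lipschitzness of $\SSS$, plus the errors $\|\SSS_n\yy_n-\SSS\yy_n\|\le\kappa_n\|\yy_n-\cc_0\|$ etc., together with the ``reflection'' trick that absorbs $2\SSS\yy_n-\SSS\yy_{n-1}$ by telescoping the term $\gamma\langle\SSS\yy_n-\SSS\yy_{n-1}\mid\cdot\rangle$. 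The outcome should be an inequality of the shape
\begin{equation}
\phi_{n+1}+\Big(1-\tfrac{\gamma}{2\beta}-2\gamma\mu-6\gamma\kappa_n-\gamma\kappa_{n+1}\Big)\|\yy_n-\yy_{n+1}\|^2
\;\le\;\phi_n+a_n,
\end{equation}
where $\phi_n=\|\xx_n-\barx\|^2+\gamma\langle\text{cross term}\rangle+\tfrac{\gamma}{2\beta}\|\yy_{n-1}-\yy_n\|^2$ is a suitable nonnegative Lyapunov functional and $(a_n)$ is summable (built from $\kappa_n\|\yy_n-\barx\|$-type terms, controlled once $(\yy_n)$ is shown bounded). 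The hypothesis \eqref{e:gam} makes the bracketed coefficient $\ge\epsilon>0$, so summing gives $\sum_n\|\yy_n-\yy_{n+1}\|^2<\infty$, which is part \ref{t:i}; in particular $\BB\yy_n-\BB\barx\to 0$ strongly and — by cocoercivity and a further telescoping — $\sum_n\|\BB\yy_n-\BB\barx\|^2<\infty$, giving part \ref{t:iii} after reading off coordinates. Boundedness of $(\yy_n)$ and quasi-Fejér monotonicity of $(\phi_n)$, combined with $\|\yy_n-\yy_{n+1}\|\to 0$ and demiclosedness of $\AAA+\SSS+\BB$ (maximal monotone), let one apply Opial's lemma: every weak cluster point of $(\yy_n)$ lies in $\zer(\AAA+\SSS+\BB)$, and uniqueness of the cluster point gives $\yy_n\weakly\barx$ for some such point. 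Lemma \ref{l:2} then yields part \ref{t:ii}: $y_n\weakly\overline{x}$ solving \eqref{e:pri} and $(w_{i,n})\weakly\overline{v}_i$ with $(\overline{v}_i)$ solving \eqref{e:dual}.

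**The main obstacle** I anticipate is bookkeeping the error terms so that they remain summable without circularity: the natural bounds are in terms of $\kappa_n\|\yy_n-\barx\|$, but $\|\yy_n-\barx\|$ is only known to be bounded \emph{after} the Fejér-type estimate, which itself needs the errors controlled. The standard way out — and what I would do — is to first establish boundedness of $(\|\xx_n-\barx\|)$ directly from the recursion using $\sum_n\kappa_n<\infty$ and the anchor points $\cc_0,\dd_0$ of Condition \ref{c:new} (a Gronwall/discrete-Bellman argument), and only then feed this uniform bound back to make $(a_n)$ summable and close the Lyapunov inequality. A secondary technical point is getting the constants right in the reflected term so that the ``$6\gamma\kappa_n+\gamma\kappa_{n+1}$'' in \eqref{e:gam} genuinely dominates all error contributions (the factor $6$ suggests three error-bearing terms, each appearing with the reflection doubling); this is routine but must be done carefully to match the stated step-size condition.
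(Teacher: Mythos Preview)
Your proposal is correct and follows essentially the same route as the paper: lift to $\KKK$, recognize Algorithm~\ref{algo:main} as the backward--reflected--forward iteration \eqref{algo:primafbl} for $(\AAA,\SSS,\BB)$ with summable perturbations $(\SSS_n,\BB_n)$, derive a quasi-Fej\'er inequality whose dissipative coefficient is exactly $(1-\gamma/(2\beta)-2\gamma\mu-6\gamma\kappa_n-\gamma\kappa_{n+1})$, and conclude via demiclosedness of $\AAA+\SSS+\BB$ and Opial. Two small deviations are worth noting: first, the paper tracks $\|\yy_n-\xx\|^2$ (with $\xx\in\zer(\AAA+\SSS+\BB)$), not $\|\xx_n-\barx\|^2$---this matters because $\xx_n$ converges to $(\Id-\gamma\SSS-\gamma\BB)\barx$, not to $\barx$, so Opial must be applied in the $\yy$-variable (equivalently, observe that $\yy_{n+1}=J_{\gamma\AAA}(\yy_n-\gamma(2\SSS_n\yy_n-\SSS_n\yy_{n-1})-\gamma\BB_n\yy_n)$ is precisely Malitsky--Tam in $\yy_n$); second, the paper avoids your two-step Gronwall route by absorbing the error terms directly into a multiplicative $(1+10\gamma\kappa_n/(1-\gamma))$ factor and a summable additive remainder, obtaining boundedness and the Fej\'er-type limit in one pass.
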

\begin{proof}
Let $\KKK$, $\AAA$, $\BB$ and $\SSS$ be defined as in Notation \ref{n:main}.
  It was shown in \cite[Eq. (3.11)]{plc6} and \cite[Eq. (3.12)]{Vu2013}   that 
  \begin{equation}\label{e:3max}
  \text{$\AAA$ is maximally monotone and $\SSS$ is monotone, $\mu$-Lipschitzian, and $\BB$ is $\beta'$-cocoercive.}
  \end{equation}
  Let $n\in\NN$.
  Recall that $\AAA_{\gamma} = \gamma^{-1}(\Id-  J_{\gamma \AAA})$ 
is the Yosida approximation of $\AAA$ of index $\gamma$. 
  In view of Lemma \ref{l:1}, the operator
  \begin{align}\label{e:inverse}
\Id-\gamma \SSS-\gamma\BB\colon\zer(\AAA+\SSS+ \BB) 
\to \zer((\BB +\SSS)J_{\gamma \AAA} +& \AAA _{\gamma} )\;\notag\\
 &\text{  is a bijection with inverse $ J_{\gamma\AAA} $,}
  \end{align}
  where,   using  \cite[Proposition 23.18]{livre1}, 
the resolvent of $\AAA$ is given by
 \begin{align} 
 (\forall \xx = (x,v_1,\ldots, v_m)&\in\KK)(\forall \gamma \in \left]0,+\infty\right[)\; \notag\\
&J_{\gamma \AAA}\xx = \big( J_{\gamma A}(x+\gamma z), J_{\gamma A_{1}}(v_1-\gamma r_1), \ldots 
J_{\gamma A_{m}} (v_m-\gamma r_m)\big).
 \end{align}
  For every $n\in\NN$, define 
  \begin{equation}
 \begin{cases}
\SSS_n \colon \KKK\to\KKK \colon (x,v_1,\ldots, v_m) \mapsto  (Q_nx+\sum_{i=1}^m L_{i}^*v_i, Q_{1,n}v_1 -L_1x, \ldots, Q_{m,n}v_m -L_m x ),\\
 \BB_n \colon \KKK\to\KKK \colon (x,v_1,\ldots, v_m) \mapsto (B_nx, B_{1,n}v_1,\ldots,  B_{m,n}v_m).
 \end{cases}
 \end{equation}
 Then $\SSS_n-\SSS$ and $\BB_n-\BB$ are $\kappa_n$-Lipschitz continuous. Moreover, for $\cc= (c_0,c_1,\ldots,c_m)$ and 
 $\dd = (d_0,d_1,\ldots,d_m)$, 
 \begin{equation}
     \SSS_n\dd =\SSS\dd\; \text{and}\; \BB_n \cc = \BB \cc.
 \end{equation}
 We next set 
\begin{equation}
 \xx_n  = (x_{n}, v_{1,n},\ldots, v_{m,n})\; \text{and}\; 
\yy_n =  (y_{n}, w_{1,n},\ldots, w_{m,n})=J_{\gamma \AAA}\xx_n.
 \end{equation}
 Then the proposed algorithm can be rewritten in the space $\KK$ as follows 
 \begin{equation}
 \label{eq:281}
(\forall n\in\NN)\quad
\begin{array}{l}
\left\lfloor
\begin{array}{l}
\yy_n = J_{\gamma \AAA}\xx_n\\
\xx_{n+1} = \yy_n - \gamma 2\SSS_n \yy_n +\gamma \SSS_n \yy_{n-1} -\gamma \BB_n  \yy_n.
\end{array}
\right.\\[2mm]
\end{array}
\end{equation}
  Let $\xx \in \zer(\AAA +\SSS+\BB)$. Then,
it follows from \eqref{e:inverse} that there exists 
$\yy \in \zer( (\BBB+\SSS)J_{\gamma A} +\AAA_{\gamma})$ 
such that $\xx = J_{\gamma \AAA}\yy $ and $\yy = (\Id -\gamma \SSS-\gamma \BB)\xx$.
 Since $J_{\gamma \AAA}$ is firmly nonexpansive, we obtain 
 \begin{align} \label{eq:286}
2 \|\yy_{n+1}-\xx\|^2&\leq 2\scal{\yy_{n+1}-\xx}{\xx_{n+1}-\yy}\notag\\
 &=2\scal{\yy_{n+1}-\xx}{\yy_n-\xx -\gamma(2\SSS_n\yy_n-\SSS_n\yy_{n-1})-\SSS\xx) -\gamma (\BB_n\yy_n -\BB\xx)}\notag\\
 &= 2\Gamma_{1,n} -2\gamma\Gamma_{2,n} -2\gamma\Gamma_{3,n}-2\gamma\Gamma_{4,n}-2\gamma\Gamma_{5,n}, 
 \end{align} 
 where we set
 \begin{equation} \label{eq:291}
 \begin{cases}
   \Gamma_{1,n} &=\scal{\yy_{n+1}-\xx}{\yy_n-\xx} = \frac12(\|\yy_{n+1}-\xx \|^2 +   \|\yy_{n}-\xx \|^2 -  \|\yy_{n+1}-\yy_n \|^2),\\
   \Gamma_{2,n} &=\scal{\yy_{n+1}-\xx}{\SSS\yy_n-\SSS\yy_{n-1}}= \scal{\yy_{n+1}-\yy_n}{\SSS\yy_n-\SSS\yy_{n-1}}+\scal{\yy_{n}-\xx}{\SSS\yy_n-\SSS\yy_{n-1}},\\
   \Gamma_{3,n} &=\scal{\yy_{n+1}-\xx}{\SSS\yy_{n}-\SSS\xx}= \scal{\yy_{n+1}-\xx}{\SSS\yy_n-\SSS\yy_{n+1}}+\scal{\yy_{n+1}-\xx}{\SSS\yy_{n+1}-\SSS\xx}\\
   \Gamma_{4,n} &=\scal{\yy_{n+1}-\xx}{\BB\yy_{n}-\BB\xx}= \scal{\yy_{n+1}-\yy_n}{\BB\yy_n-\BB\xx}+\scal{\yy_{n}-\xx}{\BB\yy_{n}-\BB\xx}\\
   \Gamma_{5,n} &= \scal{\yy_{n+1}-\xx}{2(\SSS_n-\SSS)\yy_n-(\SSS_n-\SSS)\yy_{n-1} +(\BB_n-\BB)\yy_n}.
 \end{cases}
 \end{equation}
 Since $\SSS_n\dd=\SSS\dd$ and $\SSS_n-\SSS$ is $\kappa_n$-Lipschitz, we have 
 \begin{align}
 |\scal{\yy_{n+1}-\xx}{(\SSS_n-\SSS)\yy_n}|
 &=\scal{\yy_{n+1}-\xx}{(\SSS_n-\SSS)\yy_n -(\SSS_n-\SSS)\dd}\notag\\
 &\leq \|\yy_{n+1}-\xx\|\|(\SSS_n-\SSS)\yy_n -(\SSS_n-\SSS)\dd\|\notag\\
 &\leq 
 \kappa_n\|\yy_{n+1}-\xx\|(\|\yy_n -\xx\|+\|\xx-\dd\|)\notag\\
  &\leq \frac{\kappa_n}{2}\|\yy_{n+1}-\xx\|^2 + \kappa_n\|\yy_n-\xx\|^2 + \kappa_n\|\dd-\xx\|^2, \label{e:tt1}
 \end{align}
 and by the same way
 \begin{align}
 |\scal{\yy_{n+1}-\xx}{(\SSS_n-\SSS)\yy_{n}-(\SSS_n-\SSS)\yy_{n-1}}| \leq \frac{\kappa_n}{2} \|\yy_{n+1}-\xx\|^2 
 +\frac{\kappa_n}{2} \|\yy_n-\yy_{n-1}\|^2. \label{e:tt2}
 \end{align}
  Since $\BB_n\cc=\BB\cc$ and $\BB_n-\BB$ is $\kappa_n$-Lipschitz, we have 
 \begin{align}
 |\scal{\yy_{n+1}-\xx}{(\BB_n-\BB)\yy_n}|
  &\leq \frac{\kappa_n}{2}\|\yy_{n+1}-\xx\|^2 + \kappa_n\|\yy_n-\xx\|^2 + \kappa_n\|\cc-\xx\|^2. \label{e:tt3}
 \end{align}
 Adding \eqref{e:tt1}, \eqref{e:tt2} and \eqref{e:tt3}, we 
 obtain the estimation $\Gamma_{5,n}$, 
 \begin{align}
 2\gamma\Gamma_{5,n} &=  3\gamma\kappa_n\|\yy_{n+1}-\xx\|^2
 +4\gamma \kappa_n\|\yy_n-\xx\|^2 +\gamma\kappa_n \|\yy_n-\yy_{n-1}\|^2+\ee_n\notag\\
 &\leq 10\gamma \kappa_n\|\yy_n-\xx\|^2 +6\gamma\kappa_n \|\yy_n-\yy_{n+1}\|^2+\gamma\kappa_n \|\yy_n-\yy_{n-1}\|^2+\ee_n,
 \end{align}
 where we set $\ee_n= 2\gamma\kappa_n(\|\dd-\xx\|^2+\|\cc-\xx\|^2) $.
 Let us estimate $\Gamma_{2,n}$, $\Gamma_{3,n}$ and $\Gamma_{4,n}$. Since $\SSS$ is $\mu$-Lipschitzian, we obtain
\begin{align}\label{eq:300}
 \Gamma_{2,n} &\geq \scal{\yy_{n}-\xx}{\SSS\yy_n-\SSS\yy_{n-1}} -  \|\yy_{n+1}-\yy_n \| \|\SSS\yy_{n-1}-\SSS\yy_n \|\notag\\
&\geq \scal{\yy_{n}-\xx}{\SSS\yy_n-\SSS\yy_{n-1}} -  \mu\|\yy_{n+1}-\yy_n \| \|\yy_{n-1}-\yy_n \|\notag\\
&\geq \scal{\yy_{n}-\xx}{\SSS\yy_n-\SSS\yy_{n-1}} - (\mu/2) \|\yy_{n+1}-\yy_n \|^2 - (\mu/2) \|\yy_{n-1}-\yy_n \|^2,
\end{align}
where the last inequality follows from the Cauchy-Schwarz inequality. Since $\SSS$ is monotone,  
 we get
  \begin{equation} \label{eq:307}
 \Gamma_{3,n} \geq  \scal{\yy_{n+1}-\xx}{\SSS\yy_n-\SSS\yy_{n+1}}.
\end{equation}
 Set $\beta' = \min_{0\leq i\leq m} \beta_i$ and $\epsilon_1 = \beta'-\beta > 0$. Then,
  using the $\beta$-cocoercivity of $B$, 
\begin{align} \label{eq:311}
 \Gamma_{4,n} & \geq \scal{\yy_{n+1}-\yy_n}{\BB\yy_n-\BB\xx} + \beta' \|\BB\yy_n-\BB\xx \|^2\notag\\
&\geq-\frac{1}{4\beta}\|\yy_{n+1}-\yy_n\|^2 + \epsilon_1 \|\BB\yy_n-\BB\xx \|^2.
\end{align}
 Now, inserting \eqref{eq:291}, \eqref{eq:300}, \eqref{eq:307} and \eqref{eq:311} into \eqref{eq:286},  
we obtain
 \begin{align} \label{eq:317}
 &\|\yy_{n+1}-\xx\|^2 + \gamma(\mu+\kappa_{n+1}) \|\yy_{n+1}-\yy_n \|^2 +2\gamma\scal{\yy_{n+1}-\xx}{\SSS\yy_n-\SSS\yy_{n+1}}\notag\\
&\leq  (1+10\gamma\kappa_n)\|\yy_{n}-\xx\|^2+ \gamma(\kappa_n+\mu) \|\yy_{n-1}-\yy_n \|^2+2\gamma \scal{\yy_{n}-\xx}{\SSS\yy_{n-1}-\SSS\yy_n}\notag\\
&\hspace{1.5cm}-(1-\frac{\gamma}{2\beta}-2\gamma\mu-6\gamma\kappa_n -\gamma \kappa_{n+1})\|\yy_{n+1}-\yy_n\|^2+\ee_n-2\gamma \epsilon_1 \|\BB\yy_n-\BB\xx \|^2.
 \end{align} 
 We next set 
 $t_n=(\kappa_n+\mu) \|\yy_{n-1}-\yy_n \|^2 -\|\SSS\yy_n-\SSS\yy_{n-1}\|^2 \geq 0 $ and
 using
 \begin{equation*}
 2\gamma\scal{\yy_{n+1}-\xx}{\SSS\yy_n-\SSS\yy_{n+1}}
 = \gamma \|\yy_{n+1}-\xx +\SSS\yy_n-\SSS\yy_{n+1} \|^2 
 -\gamma \|\yy_{n+1}-\xx\|^2- \gamma \|\SSS\yy_n-\SSS\yy_{n+1} \|^2,
 \end{equation*}
 we derive from \eqref{eq:317} that 
  \begin{align} \label{eq:317aa}
 &(1-\gamma)\|\yy_{n+1}-\xx\|^2 + \gamma t_{n+1} +\gamma \|\yy_{n+1}-\xx +\SSS\yy_n-\SSS\yy_{n+1} \|^2\notag\\
&\leq  (1-\gamma+10\gamma\kappa_n)\|\yy_{n}-\xx\|^2+ \gamma t_n+ \gamma \|\yy_{n}-\xx +\SSS\yy_{n-1}-\SSS\yy_{n} \|^2 
\notag\\
&\hspace{2.5cm}-(1-\frac{\gamma}{2\beta}-2\gamma\mu-6\gamma\kappa_n -\gamma \kappa_{n+1})\|\yy_{n+1}-\yy_n\|^2+\ee_n-2\gamma \epsilon_1 \|\BB\yy_n-\BB\xx \|^2.
 \end{align} 
 In turn, 
   \begin{align} \label{eq:317ab}
 &\|\yy_{n+1}-\xx\|^2 + \frac{\gamma}{1-\gamma} t_{n+1} +\frac{\gamma}{1-\gamma} \|\yy_{n+1}-\xx +\SSS\yy_n-\SSS\yy_{n+1} \|^2\notag\\
&\leq  (1+ \frac{10\gamma\kappa_n}{1-\gamma})\left[\|\yy_{n}-\xx\|^2+ \frac{\gamma}{1-\gamma} t_n+ \frac{\gamma}{1-\gamma} \|\yy_{n}-\xx +\SSS\yy_{n-1}-\SSS\yy_{n} \|^2\right] 
\notag\\
&\hspace{0.5cm}-\frac{1}{1-\gamma}\left[(1-\frac{\gamma}{2\beta}-2\gamma\mu-6\gamma\kappa_n -\gamma \kappa_{n+1})\|\yy_{n+1}-\yy_n\|^2-\ee_n+2\gamma \epsilon_1 \|\BB\yy_n-\BB\xx \|^2\right].
 \end{align} 
 Since $(\ee_n)_{n\in\NN}$ and $(\kappa_n)_{n\in\NN}$ are  summable sequence, it follows \eqref{eq:317ab} that 
 $(\|\yy_n-\xx\|)_{n\in\NN}$ is a bounded seqeunce and that
 \begin{equation}\label{eq:332}
\sum_{n\in\NN} \|\yy_{n+1}-\yy_n\|^2 < +\infty \; \text{and}\; \sum_{n\in\NN} \|\BB\yy_n-\BB\xx\|^2 < +\infty,
\end{equation}
and that 
\begin{equation}\label{eq:336}
\exists\;  \xi = \lim_{n\to\infty}\bigg(\|\yy_{n+1}-\xx\|^2 + \frac{\gamma}{1-\gamma} t_{n+1} +\frac{\gamma}{1-\gamma} \|\yy_{n+1}-\xx +\SSS\yy_n-\SSS\yy_{n+1} \|^2 \bigg) \in\RR_{+}.
\end{equation}
 Moreover,   
by \eqref{eq:332},  
  \begin{equation}\label{eq:342}
 \lim_{n\to\infty} \big(2\gamma\scal{\yy_{n+1}-\xx}{\SSS\yy_n-\SSS\yy_{n+1}}\big) =\lim_{n\to\infty}t_n =0.
\end{equation} 
Therefore, \eqref{eq:336} is equivalent to 
 \begin{equation}\label{eq:346}
\exists\;  \xi = \lim_{n\to\infty}\|\yy_{n}-\xx\|^2  \in\RR_{+}.
\end{equation}
\ref{t:ii}. We first prove that every sequentially weak cluster point of $(\yy_n)_{n\in\NN}$ is in $\zer(\AAA+\BB+\SSS)$. Suppose
that $(\yy_{k_n})_{n\in\NN}$ is a subsequence of $(\yy_n)_{n\in\NN}$ that converges weakly to $\yy\in\KKK$. By \eqref{eq:346}, we also have
$\yy_{1+k_n}\weakly \yy$. Furthermore, by the definition of $\yy_{1+k_n}$ in \eqref{eq:281}, we have
\begin{equation}\label{eq:352}
\xx_{1+k_n}-\yy_{1+k_n}\in \gamma \AAA \yy_{1+k_n},
\end{equation}
and the definition of $\xx_{n+1}$ in \eqref{eq:281}, 
\begin{align}\label{eq:356}
\xx_{1+k_n} &= \yy_{k_n}-\gamma (2\SSS_n\yy_{k_n}-\SSS_n\yy_{k_n-1}) -\gamma \BB_n\yy_{k_n}\notag\\
&=  \yy_{k_n}-\gamma (2\SSS\yy_{k_n}-\SSS \yy_{k_n-1}) -\gamma \BB\yy_{k_n} + \aaa_n,
\end{align}
where we set 
\begin{equation}
\aaa_n=2\gamma(\SSS\yy_{k_n} - \SSS_n\yy_{k_n}  ) 
+\gamma (\SSS_n\yy_{k_n-1}-\SSS \yy_{k_n-1})+\gamma(\BB\yy_{k_n} -\BB_n\yy_{k_n})\to 0.
\end{equation}
Adding \eqref{eq:352} and \eqref{eq:356} gives
\begin{align}\label{eq:356c}
 \yy_{k_n}-\yy_{1+k_n}-\gamma (\SSS\yy_{k_n}-\SSS\yy_{k_n-1}) -\gamma (\SSS\yy_{k_n}-&\SSS\yy_{1+k_n}) +\aaa_n
-\gamma (\BB\yy_{k_n}-\BB\yy_{1+k_n})\notag\\
 &\in \gamma(\AAA+\SSS+\BB)\yy_{1+k_n}.
\end{align}
Using \eqref{e:3max} and \cite[Corollary 25.5]{livre1}, the sum $\AAA+\SSS+\BBB$ is  maximally monotone. Hence, its graph is closed in $\KKK^{weak}\times\KKK^{strong}$ \cite[Proposition 20.38]{livre1}. Note that, in view of \eqref{eq:342} and the Lipschitzianity of $\SSS$ and $\BB$, we have 
\begin{equation}
 \yy_{k_n}-\yy_{1+k_n}-\gamma \SSS(\yy_{k_n}-\yy_{k_n-1}) -\gamma \SSS(\yy_{k_n}-\yy_{1+k_n}) 
-\gamma (\BB\yy_{k_n}-\BB\yy_{1+k_n})\to 0,
\end{equation}
which, and $\yy_{1+k_n}\weakly \yy$, implies that $\yy \in\zer(\AAA+\SSS+\BB)$. By Opial's result \cite{OPial}, the $(\yy_n)_{n\in\NN}$ 
converges weakly to some point $\overline{\xx} = (\overline{x},\overline{v}_1,\ldots,\overline{v}_m)\in\zer(\AAA+\SSS+\BB)$. Therefore,
$y_n\weakly \overline{x}$ and for every $i\in\{1,\ldots,m\}$, $w_{i,n}\weakly \overline{v}_i$. By \eqref{e:dss}, $ \overline{x}\; \text{solves}\; \eqref{e:pri}\; \text{and}\;   ( \overline{v}_1,\ldots, \overline{v}_m)\;
   \text{solves}\; \eqref{e:dual}$.
\\
\noindent
\ref{t:i} $\&$ \ref{t:iii}. The conclusions follow from \eqref{eq:336}.

\end{proof}
\begin{theorem} \label{t:2} 
Under the setting of Problem \ref{p:primal-dual} and the same conditions stated in Theorem \ref{t:1}.
Let $(y_n)_{n\in\NN}$ and $(w_{1,n},\ldots,w_{m,n})_{n\in\NN}$ be 
sequences generated by Algorithm \ref{algo:main}. 
Then, the following hold.
\begin{enumerate}
 \item  \label{t:2i} If $A+B+Q$ is uniformly monotone at $\overline{x}$,
the sequence $(y_n)_{n\in\NN}$ converges strongly to $\overline{x}$.
 \item \label{t:2ii}  If $A_j+B_j+Q_j$ is uniformly monotone at $\overline{v}_j$, for some $j\in\{1,\ldots,m\}$, then
$(w_{j,n})_{n\in\NN}$ converges strongly to $\overline{v}_j$.
\end{enumerate}
\end{theorem}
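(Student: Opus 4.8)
The plan is to re-enter the proof of Theorem~\ref{t:1} at the places where plain monotonicity was used and to squeeze out of the uniform monotonicity hypothesis an extra modulus term that pins down the strong limit of the relevant block of $\yy_n$. We keep the notation $\KKK,\AAA,\BB,\SSS,\SSS_n,\BB_n,\cc,\dd,\xx_n$ and $\yy_n=(y_n,w_{1,n},\dots,w_{m,n})$ of that proof, and write $\xo=(\overline{x},\overline{v}_1,\dots,\overline{v}_m)\in\zer(\AAA+\SSS+\BB)$ for the weak limit of $(\yy_n)_{n\in\NN}$ obtained in Theorem~\ref{t:1}, so $0\in(\AAA+\SSS+\BB)\xo$.

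The first step is to lift uniform monotonicity to the product space. Since the coupling part $(x,\vv)\mapsto\big(\sum_iL_i^*v_i,-L_1x,\dots,-L_mx\big)$ of $\SSS$ is skew-adjoint, it contributes nothing to the pairing with $\yy-\xo$; hence for every $\yy=(x,v_1,\dots,v_m)\in\KKK$ and every $\ww\in(\AAA+\SSS+\BB)\yy$ there are selections $c\in(A+B+Q)x$, $\overline{c}\in(A+B+Q)\overline{x}$, $c_i\in(A_i+B_i+Q_i)v_i$, $\overline{c}_i\in(A_i+B_i+Q_i)\overline{v}_i$ with $\scal{\yy-\xo}{\ww}=\scal{x-\overline{x}}{c-\overline{c}}+\sum_{i=1}^m\scal{v_i-\overline{v}_i}{c_i-\overline{c}_i}$. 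As $A+B+Q$ and the $A_i+B_i+Q_i$ are monotone, this yields: if $A+B+Q$ is $\phi$-uniformly monotone at $\overline{x}$, then $\scal{\yy-\xo}{\ww}\ge\phi(\|x-\overline{x}\|)$ for all such $\yy,\ww$; and, symmetrically, if $A_j+B_j+Q_j$ is $\phi_j$-uniformly monotone at $\overline{v}_j$, then $\scal{\yy-\xo}{\ww}\ge\phi_j(\|v_j-\overline{v}_j\|)$.

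The second step is to make this modulus appear along the iterates. From \eqref{eq:281}, $\gamma^{-1}(\xx_{n+1}-\yy_{n+1})\in\AAA\yy_{n+1}$, so $\gamma^{-1}(\xx_{n+1}-\yy_{n+1})+\SSS\yy_{n+1}+\BB\yy_{n+1}\in(\AAA+\SSS+\BB)\yy_{n+1}$; substituting $\xx_{n+1}=\yy_n-\gamma(2\SSS_n\yy_n-\SSS_n\yy_{n-1})-\gamma\BB_n\yy_n$, this element equals $\gamma^{-1}(\yy_n-\yy_{n+1})+\rr_n$ with $\rr_n=(\SSS\yy_{n+1}-\SSS\yy_n)+(\SSS\yy_{n-1}-\SSS\yy_n)-2(\SSS_n-\SSS)\yy_n+(\SSS_n-\SSS)\yy_{n-1}+(\BB\yy_{n+1}-\BB\yy_n)-(\BB_n-\BB)\yy_n$. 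Using that $\SSS$ is $\mu$-Lipschitzian, that $\SSS_n-\SSS$ and $\BB_n-\BB$ are $\kappa_n$-Lipschitzian with $\SSS_n\dd=\SSS\dd$, $\BB_n\cc=\BB\cc$ and $\kappa_n\to0$, and the facts $\|\yy_{n+1}-\yy_n\|\to0$, $\|\BB\yy_n-\BB\xo\|\to0$ and $(\yy_n)_{n\in\NN}$ bounded established in Theorem~\ref{t:1} and \eqref{eq:332}, each summand of $\rr_n$ tends to $0$, hence $\rr_n\to0$.

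Finally, applying the first step with $\yy=\yy_{n+1}$ and $\ww=\gamma^{-1}(\yy_n-\yy_{n+1})+\rr_n$, in case \ref{t:2i} one gets $\phi(\|y_{n+1}-\overline{x}\|)\le\gamma^{-1}\scal{\yy_{n+1}-\xo}{\yy_n-\yy_{n+1}}+\scal{\yy_{n+1}-\xo}{\rr_n}$, and by the polarization identity $\scal{\yy_{n+1}-\xo}{\yy_n-\yy_{n+1}}=\tfrac12\big(\|\yy_n-\xo\|^2-\|\yy_{n+1}-\xo\|^2-\|\yy_n-\yy_{n+1}\|^2\big)$. Because $\lim_n\|\yy_n-\xo\|^2$ exists by \eqref{eq:346}, $\|\yy_n-\yy_{n+1}\|\to0$, and $(\yy_n-\xo)_{n\in\NN}$ is bounded while $\rr_n\to0$, the right-hand side tends to $0$; since $\phi\ge0$ this forces $\phi(\|y_{n+1}-\overline{x}\|)\to0$, and therefore $\|y_n-\overline{x}\|\to0$ because $\phi$ is increasing and vanishes only at $0$. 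Case \ref{t:2ii} is verbatim the same with the $\phi_j$-inequality of the first step, giving $\phi_j(\|w_{j,n+1}-\overline{v}_j\|)\to0$ and hence $w_{j,n}\to\overline{v}_j$. The only genuinely delicate point is the second step: one must verify that the mismatch between $\SSS_n,\BB_n$ evaluated at $\yy_n,\yy_{n-1}$ in the algorithm and the exact $\SSS,\BB$ evaluated at $\yy_{n+1}$ in the inclusion is asymptotically negligible, and this is precisely where the three conclusions of Theorem~\ref{t:1} combine with Condition~\ref{c:new}; the rest is routine.
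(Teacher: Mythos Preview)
Your proof is correct and follows essentially the same route as the paper's: construct from the recursion and the resolvent inclusion an element of $(\AAA+\SSS+\BB)\yy_{n+1}$ that tends to $0$, pair it against $\yy_{n+1}-\xo$, and use uniform monotonicity of the relevant block together with the skew cancellation of the $L_i,L_i^*$ terms. The paper simply carries this out componentwise---defining residuals $\overline{p}_{n+1}\in\HH$ and $\overline{q}_{i,n+1}\in\GG_i$, showing each tends to $0$, and then summing the monotonicity inequalities---whereas you perform the skew cancellation once at the abstract level and work entirely in $\KKK$; unwound, the two computations coincide.
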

\begin{proof}  Let $n\in\NN$ and 
set
\begin{equation}
    p_{n+1}=(x_{n+1}-y_{n+1})/\gamma + B_ny_{n+1}+Q_ny_{n+1} +\sum_{i=1}^m L_{i}^*w_{i,n+1}.
\end{equation} 
Then, 
\begin{align}
p_{n+1}+&(y_{n+1}-y_{n})/\gamma =  (x_{n+1}-y_n)/\gamma+  B_ny_{n+1}+Q_ny_{n+1}  \notag\\
&=-\sum_{i=1}^m L_{i}^*(w_{i,n}-w_{i,n-1}) - (Q_n y_{n}- Q_ny_{n-1} )- (B_ny_{n}-B_ny_{n+1}) - (Q_ny_n-Q_ny_{n+1})\notag\\
&+\sum_{i=1}^mL_i(w_{i,n+1}-w_{i,n}).
\label{e:re1}
\end{align}
Since
$\sum_{k\in\NN}\|\yy_k-\yy_{k+1}\|^2 < +\infty$,
we obtain $y_n-y_{n+1}\to 0$ and
$(\forall i\in\{1,\ldots,m\})\; w_{i,n}-w_{i,n+1}\to 0$.  Hence, it follows from the Lipschitzianity of $Q,B$ and $Q_n-Q, B_n-B$ that
\begin{equation}
    \begin{cases}
    \|Q_ny_n-Q_ny_{n-1}| \leq \|Qy_n-Qy_{n-1}\|+\|(Q_n-Q)y_n-(Q_n-Q)y_{n-1}\| \to 0\notag\\
     \|B_ny_n-B_ny_{n-1}| \leq \|By_n-By_{n-1}\|+\|(B_n-B)y_n-(B_n-B)y_{n-1}\| 
    \end{cases}
\end{equation}
In turn, it follows from  \eqref{e:re1} that 
\begin{equation}
    p_{n+1}\to 0.\label{e:0a}
\end{equation}
We next set 
\begin{equation}
    (\forall i\in \{1,\ldots,n\})\;
    q_{i,n+1}= (v_{i,n+1}-w_{i,n+1})/\gamma 
    + B_{i,n}w_{i,n+1} + Q_{i,n}w_{i,n+1} -L_iy_{n+1}. 
\end{equation}
Then, for each $i\in \{1,\ldots,n\}$, we have 
\begin{align}
 q_{i,n+1}&+(w_{i,n+1} -w_{i,n})/\gamma = (v_{i,n+1}-w_{i,n})/\gamma 
    + B_{i,n}w_{i,n+1} + Q_{i,n}w_{i,n+1} -L_iy_{n+1}\notag\\
   & =- (Q_{i,n} w_{i,n}- Q_{i,n}w_{i,n-1} ) +  L_i(y_{n}-y_{n-1}) -( B_{i,n} w_{i,n}-B_{i,n}w_{i+1,n}) -( Q_{i,n} w_{i,n}-Q_{i,n}w_{i+1,n})\notag\\
   &+ L_i(y_{n}-y_{n+1}).
    \label{e:re2}
\end{align}
Since $y_n-y_{n+1}\to 0$ and
$(\forall i\in\{1,\ldots,m\})\; w_{i,n}-w_{i,n+1}\to 0$, and
$(L_i)_{1\leq i\leq m}$, $B$, $Q$, $B_n-B, Q_n-Q$ 
are Lipschitz continuous, 
we derive from \eqref{e:re2} that 
\begin{equation}
    q_{i,n+1} \to 0.\label{e:0b}
\end{equation}
Now,  by the definition of $(y_n)_{n\in\NN}$, we have 
 \begin{equation}
 \overline{p}_{n+1}= p_{n+1}+ (B+Q)y_{n+1}-(B_n+Q_n)y_{n+1} \in-z+ (A+B+Q)y_{n+1}+\sum_{i=1}^m L_{i}^*w_{i,n+1},\label{e:re1a}
 \end{equation}
 and by the definition of $(v_{i,n})_{n\in\NN}$, 
 \begin{equation}
 \overline{q}_{i,n+1}= q_{i,n+1}+ (B_i+Q_i)w_{i,n+1}-(B_{i,n}+Q_{i,n})w_{i,n+1}\in r_i+ (A_i+B_i+Q_i) w_{i,n+1}-L_iy_{n+1}.\label{e:re2a}
 \end{equation}
 By Condition \ref{c:new}, 
 \begin{align}
     \overline{p}_{n+1} \leq \|p_{n+1}\| + \kappa_{0,n} (\|y_{n+1}-c_0\| +\|y_{n+1}-d_0\|) \to 0,
 \end{align}
 and for every $i\in\{1,\ldots,m\}$,
  \begin{align}
     \overline{q}_{i,n+1} \leq \|q_{n+1}\| + \kappa_{i,n} (\|w_{i,n}-c_{i}\| +\|w_{i,n}-d_i\|) \to 0.
 \end{align}
 We recall that $(y_n,w_{1,n},\ldots,w_{m,n})_{n\in\NN}$ converges weakly to
  $(\overline{y},\overline{v}_1,\ldots,\overline{y}_m) \in \zer(\AAA+\SSS+\BB)$ which verifies 
 \begin{equation}
     -\sum_{i=1}^mL^{*}_i\overline{v}_i \in -z + (A+B+Q)\overline{y}. \label{e:re1b}
 \end{equation}
 and 
 \begin{equation}
     (\forall i\in\{1,\ldots,m\})\;  L_i\overline{y} \in r_i+ (A_i+B_i+Q_i)\overline{v}_i.\label{e:re2b}
 \end{equation}
  (i): Suppose that $A+B+Q$ is uniformly monotone at $\overline{y}$. Then there exists an increasing function $\phi_{A+B+Q}\colon\left[0,\infty\right[\to \left[0,\infty\right]$ that vanishes only at $0$ such that 
 \begin{equation}\label{unif}
\big(\forall u\in (A+B+Q)x\big)\big(\forall (y,v)\in\gra (A+B+Q)\big)
\quad\scal{x-y}{u-v}\geq \phi_{A+B+Q}(\|y-x\|).
\end{equation}
 Therefore, we derive from \eqref{e:re1a} and \eqref{e:re1b} that  
 \begin{equation}
    \scal{\overline{p}_{n+1}-\sum_{i=1}^mL_{i}^*(w_{i,n+1}-\overline{v}_i)}{y_{n+1}-\overline{y}}\geq \phi_{A+B+Q}(\|y_{n+1}-\overline{y}\|). \label{e:re3a}
 \end{equation}
 For every $i\in\{1,\ldots,m\}$, $A_i+B_i+Q_i$ is monotone, it follows from \eqref{e:re2a} and \eqref{e:re2b} that
 \begin{equation}
   (\forall i\in\{1,\ldots,m\})\;  \scal{\overline{q}_{i,n+1}+L_i(y_{n+1}-\overline{y})}{w_{i,n+1}-\overline{v}_i}\geq 0,
 \end{equation}
 which implies that 
 \begin{equation}
     \sum_{i=1}^m\scal{\overline{q}_{i,n+1}+L_i(y_{n+1}-\overline{y})}{w_{i,n+1}-\overline{v}_i}\geq 0.\label{e:re3b}
 \end{equation}
 Adding \eqref{e:re3a} and \eqref{e:re3b}, we obtain 
 \begin{align}
    \phi_{A+B+Q}(\|y_{n+1}-\overline{y}\|) 
    &\leq \scal{\overline{p}_{n+1}}{y_{n+1}-\overline{y}} +
    \sum_{i=1}^m\scal{\overline{q}_{i,n+1}}{w_{i,n+1}-\overline{v}_i}\notag\\
    &\to 0,
 \end{align}
 where the last implication follows from \eqref{e:0a}, \eqref{e:0b} and Theorem \ref{t:1}. Therefore, $y_n\to 0$.
 
 (ii): Suppose that $A_j+B_j+Q_j$ is uniformly monotone at $\overline{v}_j$ for some $j\in\{1,\ldots,m\}$. Then using the same argument, we also have 
 \begin{align}
    \phi_{A_j+B_j+Q_j}(\|w_{j,n+1}-\overline{v}_j\|) 
    &\leq \scal{\overline{p}_{n+1}}{y_{n+1}-\overline{y}} +
    \sum_{i=1}^m\scal{\overline{q}_{i,n+1}}{w_{i,n+1}-\overline{v}_i}\notag\\
    &\to 0,
    \end{align}
    which implies that $w_{j,n}\to 0$.
 \end{proof}
 
 \begin{corollary} \label{coro:1}
 Under the same conditions as in Theorem \ref{t:1}. Let $(x_n)_{n\in\NN}$ and $((v_{1,n},\ldots, v_{m,n}))_{n\in\NN}$
 be sequences generated by Algorithm \ref{algo:main}. Then the following hold.
 \begin{enumerate}
 \item If $A+B+Q$ is uniformly monotone at $\overline{x}$, then 
 \begin{equation}\label{e:Q1}
 x_n\to \overline{y}= \overline{x} -\gamma \sum_{i=1}^m L_{i}^*\overline{v}_i -\gamma Q\overline{x} -\gamma B\overline{x}.
 \end{equation}
 \item If there exists $j\in\{1,\ldots,m\}$ such that $A_j+B_j+Q_j$ is uniformly monotone at $\overline{v}_j$,
 for some $j\in\{1,\ldots,m\}$,
 then 
  \begin{equation}\label{e:Qj}
 v_{j,n}\to \overline{w}_j= \overline{x} +\gamma L_{j}^*\overline{x} -\gamma Q_j\overline{v}_j -\gamma B_j\overline{v}_j.
 \end{equation}
 \end{enumerate}
 \end{corollary}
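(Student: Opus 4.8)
The idea is to recover the primal iterate $x_n$ and the dual iterates $v_{i,n}$ from the shifted variables $y_n$ and $w_{i,n}$, whose asymptotic behaviour is already pinned down by Theorems \ref{t:1} and \ref{t:2}. The cleanest route is to reuse the quantities $p_{n+1}$ and $q_{i,n+1}$ from the proof of Theorem \ref{t:2}: solving their definitions for $x_{n+1}$ and $v_{i,n+1}$ yields $x_{n+1}=y_{n+1}+\gamma p_{n+1}-\gamma B_ny_{n+1}-\gamma Q_ny_{n+1}-\gamma\sum_{i=1}^mL_i^*w_{i,n+1}$ and, for each $i$, $v_{i,n+1}=w_{i,n+1}+\gamma q_{i,n+1}-\gamma B_{i,n}w_{i,n+1}-\gamma Q_{i,n}w_{i,n+1}+\gamma L_iy_{n+1}$, while $p_{n+1}\to0$ and $q_{i,n+1}\to0$ were established in \eqref{e:0a}--\eqref{e:0b}. (Equivalently, one may pass to the limit directly in the recursion \eqref{algo:primal-dual}, using Theorem \ref{t:1}\ref{t:i} to kill the reflection increments $w_{i,n}-w_{i,n-1}\to0$ and $y_n-y_{n-1}\to0$.) Everything then comes down to identifying the limits of the single-valued terms on the right-hand sides.

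For (i), the hypothesis together with Theorem \ref{t:2}\ref{t:2i} gives $y_n\to\overline x$ in norm. I would then show $B_ny_{n+1}\to B\overline x$ and $Q_ny_{n+1}\to Q\overline x$ strongly by the splitting $B_ny_{n+1}=By_{n+1}+(B_n-B)y_{n+1}$: the first term converges to $B\overline x$ since a cocoercive operator is Lipschitz, and the second is $\le\kappa_{0,n}\|y_{n+1}-c_0\|$ by Condition \ref{c:new}\,(i) (which gives $(B_n-B)c_0=0$), hence tends to $0$ because $\kappa_{0,n}\to0$ and $(y_n)$ is bounded; likewise for $Q$. Since each $L_i^*$ is bounded linear, hence weak-to-weak continuous, Theorem \ref{t:1}\ref{t:ii} gives $\sum_iL_i^*w_{i,n+1}\weakly\sum_iL_i^*\overline v_i$. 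Substituting into the expression for $x_{n+1}$ yields $x_{n+1}\weakly\overline y$ with $\overline y$ as in \eqref{e:Q1}, and if in addition every $A_i+B_i+Q_i$ is uniformly monotone at $\overline v_i$ then Theorem \ref{t:2}\ref{t:2ii} upgrades $w_{i,n}\to\overline v_i$ to norm convergence, whence $x_n\to\overline y$ in norm. Part (ii) is symmetric: Theorem \ref{t:2}\ref{t:2ii} gives $w_{j,n}\to\overline v_j$, the perturbation terms $B_{j,n}w_{j,n+1}$ and $Q_{j,n}w_{j,n+1}$ are handled verbatim via Condition \ref{c:new}\,(ii), and $L_jy_{n+1}\weakly L_j\overline x$ by weak continuity of $L_j$, so substituting into the expression for $v_{j,n+1}$ produces \eqref{e:Qj} (up to the evident typographical corrections $\overline x\mapsto\overline v_j$ and $L_j^*\mapsto L_j$ in its right-hand side), in norm if moreover $A+B+Q$ is uniformly monotone at $\overline x$.

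The parts involving the perturbed single-valued operators are routine — they are the same telescoping/Lipschitz estimates already used in Theorems \ref{t:1} and \ref{t:2} — and the quantities $p_{n+1},q_{i,n+1}$ do the bookkeeping for free. The one genuine subtlety, which a careful write-up should make explicit, is topological: in the $x$-recursion the coupling term $\sum_iL_i^*w_{i,n}$ carries only the \emph{weak} convergence of the $w_{i,n}$ (and in the $v_j$-recursion the term $L_jy_n$ only the weak convergence of $y_n$), so in general $(x_n)$ and $(v_{j,n})$ converge only weakly; norm convergence holds precisely when the relevant dual (resp.\ primal) blocks are uniformly monotone in the sense of Theorem \ref{t:2}. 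This mismatch between the ``$\to$'' written in Corollary \ref{coro:1} and what the cited results actually deliver is the point I would want to iron out.
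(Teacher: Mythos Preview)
The paper states Corollary \ref{coro:1} without proof, treating it as immediate from Theorems \ref{t:1} and \ref{t:2}. Your route --- invert the definitions of $p_{n+1}$ and $q_{i,n+1}$ from the proof of Theorem \ref{t:2} to express $x_{n+1}$ and $v_{i,n+1}$, then pass to the limit using $p_{n+1}\to0$, $q_{i,n+1}\to0$, the strong convergence of the relevant block from Theorem \ref{t:2}, and Condition \ref{c:new} for the perturbed operators --- is exactly the argument the authors have in mind, and the computations you sketch are correct.

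Your closing observation is not a loose end to ``iron out'' but a genuine gap in the corollary as stated. The paper reserves $\to$ for strong convergence, yet under the hypothesis of part (i) alone the coupling term $\sum_{i=1}^m L_i^* w_{i,n}$ converges only weakly to $\sum_{i=1}^m L_i^* \overline v_i$, since Theorem \ref{t:1}\ref{t:ii} yields only $w_{i,n}\weakly\overline v_i$ and the $L_i^*$ are merely bounded. Hence one obtains only $x_n\weakly\overline y$; the upgrade to norm convergence requires, as you say, the additional uniform monotonicity of every $A_i+B_i+Q_i$ via Theorem \ref{t:2}\ref{t:2ii}. The situation for part (ii) is symmetric. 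So you have correctly identified that the corollary either overstates the conclusion or understates the hypotheses. You are also right about the typos in \eqref{e:Qj}: the limit should read $\overline v_j+\gamma L_j\overline x-\gamma Q_j\overline v_j-\gamma B_j\overline v_j$.
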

\begin{corollary} Under the same condition as in Lemma \ref{l:M-T}. Let $(x_n,y_n)_{n\in\NN}$ be gernerated by \eqref{algo:primafbl}. Suppose that $A+B+Q$ is uniformy monotone at 
$\overline{x}$. Then $y_n\to x\in \zer(A+B+Q)$ and $x_n\to \overline{x}-\gamma (Q+B)\overline{x}$.
\end{corollary}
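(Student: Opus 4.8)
The plan is to specialize the argument of Theorem~\ref{t:2}\ref{t:2i} to the single-operator recursion \eqref{algo:primafbl}: here there are no dual variables, no linear operators, and no inexactness (formally all the ``$\kappa_n$'' terms are zero, and $B_n=B$, $Q_n=Q$). First I would record that the sequence produced by \eqref{algo:primafbl} satisfies $\sum_{n\in\NN}\|y_n-y_{n+1}\|^2<+\infty$, so that $y_n-y_{n+1}\to 0$; this is the analogue of Theorem~\ref{t:1}\ref{t:i} in the present setting (it is contained in the Fej\'er-type estimate \eqref{eq:317ab} with $m=0$ and $\kappa_n\equiv 0$, or may be quoted directly from \cite{Malitsky18b}). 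By Lemma~\ref{l:M-T} we already have $y_n\weakly\overline{x}\in\zer(A+B+Q)$.

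Next I would reproduce the construction from the proof of Theorem~\ref{t:2}. Set $p_{n+1}=(x_{n+1}-y_{n+1})/\gamma+By_{n+1}+Qy_{n+1}$. Plugging in $x_{n+1}=y_n-\gamma(2Qy_n-Qy_{n-1})-\gamma By_n$ gives
\begin{equation*}
p_{n+1}+\frac{y_{n+1}-y_n}{\gamma}=-(Qy_n-Qy_{n-1})-(Qy_n-Qy_{n+1})-(By_n-By_{n+1}).
\end{equation*}
Since $y_n-y_{n+1}\to 0$ and $Q$, $B$ are Lipschitz continuous, the right-hand side tends to $0$, and so does $(y_{n+1}-y_n)/\gamma$; hence $p_{n+1}\to 0$. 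On the other hand $y_{n+1}=J_{\gamma A}x_{n+1}$ yields $(x_{n+1}-y_{n+1})/\gamma\in Ay_{n+1}$, so $p_{n+1}\in(A+B+Q)y_{n+1}$, while $0\in(A+B+Q)\overline{x}$.

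Now I would invoke uniform monotonicity of $A+B+Q$ at $\overline{x}$: with the modulus function $\phi_{A+B+Q}$ from the definition,
\begin{equation*}
\phi_{A+B+Q}(\|y_{n+1}-\overline{x}\|)\leq\scal{y_{n+1}-\overline{x}}{p_{n+1}}\leq\|y_{n+1}-\overline{x}\|\,\|p_{n+1}\|.
\end{equation*}
The weak convergence of $(y_n)_{n\in\NN}$ makes it bounded, and $p_{n+1}\to 0$, so the right-hand side tends to $0$; since $\phi_{A+B+Q}$ is increasing and vanishes only at $0$, this forces $\|y_{n+1}-\overline{x}\|\to 0$, i.e. $y_n\to\overline{x}\in\zer(A+B+Q)$. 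Finally, passing to the limit in $x_{n+1}=y_n-\gamma(2Qy_n-Qy_{n-1})-\gamma By_n$ and using continuity of $Q$ and $B$ gives $x_n\to\overline{x}-\gamma Q\overline{x}-\gamma B\overline{x}=\overline{x}-\gamma(Q+B)\overline{x}$, as claimed.

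The only delicate point — the one I would flag as the main obstacle — is the very first step: the statement of Lemma~\ref{l:M-T} as quoted gives only weak convergence of $(y_n)_{n\in\NN}$, not the square-summability of the increments $\|y_n-y_{n+1}\|$. So one must either re-derive that summability from the monotone-operator estimate underlying \eqref{algo:primafbl} (which is exactly inequality \eqref{eq:317ab} read with $m=0$ and $\kappa_n\equiv 0$), or cite the corresponding inequality in \cite{Malitsky18b}. Once that is in hand, the rest is a verbatim, and in fact simplified, transcription of the proof of Theorem~\ref{t:2}, with no summation over $i$ and no correction terms from $B_n-B$ or $Q_n-Q$.
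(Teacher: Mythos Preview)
Your proof is correct, and it is exactly the specialization of the argument of Theorem~\ref{t:2}\ref{t:2i} (combined with Corollary~\ref{coro:1}\ref{t:2i} for the limit of $(x_n)_{n\in\NN}$) to the single-space setting with $B_n\equiv B$, $Q_n\equiv Q$ and no coupling operators, which is precisely how the paper intends the corollary to be read: it is stated there without proof, as an immediate consequence of the preceding results. Your identification of the one nontrivial ingredient --- that $\sum_{n\in\NN}\|y_{n+1}-y_n\|^2<+\infty$ must be drawn from the Fej\'er-type estimate \eqref{eq:317ab} (with all $\kappa_n=0$) or from \cite{Malitsky18b}, since Lemma~\ref{l:M-T} as stated only records weak convergence --- is accurate.
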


 \begin{remark} Here are some remarks.
 \begin{enumerate}
     \item In the case when $\kappa_n\equiv 0$, as we showed in the proof of Theorem \ref{t:1} that in the space $\KKK$, Algorithm \ref{algo:main} reduces to 
 \eqref{eq:281}, with $\SSS_n\equiv \SSS, \BB_n\equiv \BB$,  which has the backward reflected forward structure. Therefore, Algorithm \ref{algo:main}
 is a primal-dual backward reflected forward splitting method. Alternative framework can be found  in 
 \cite{Bot2013,Buicomb,siop2, plc6,Combettes13,Icip14,Condat2013,sva2,Latafat2018,Pesquet15, Bang13,Bang14,Vu2013}.
 \item Whence $\kappa_n\not=0$, to the best of our knowledge, this paper was first used this type of approximation for the Lipschitzian operators.
  \item The uniform monotonicity of  $A+B+Q$ is satisfied, for instance, when either $A$ or $B$ or $Q$ is uniformly monotone. 
 \item Algorithm \ref{algo:main} is different from the methods in \cite{plc6} and \cite{Vu2013} where the strong convergence is proved under the uniform monotonicity of either $A$ or $Q$ or $B$. 
 \item By Lemma \ref{l:1}, the point $\ww= (\overline{y},\overline{w}_1,\ldots,\overline{w}_m)$ defined by \eqref{e:Q1} and \eqref{e:Qj} is a solution to the inclusion:
 \begin{equation}
     0\in (\BB+\SSS) J_{\gamma \AAA}\ww +\AAA_{\gamma}\ww.
 \end{equation}
 This is an elegant property of Algorithm \ref{algo:main}; see \cite{Attouch18} for detailed comments in the case of the backward-forward splitting.
 \end{enumerate}
 \end{remark}
 \section{Application to structured minimization problems}
 \label{s:Application}
 We provide a further application to the following structure minimization problem which was investigated in \cite{plc6,Vu2013}. 
 This structured primal-dual framework covers a widely class of convex
optimization problems and it has found many applications to image processing, machine learning
\cite{plc6,Pesquet15,Duchi09,Quyen14}.
 \begin{example}\label{App1}
Let $\HH$ be a real Hilbert space and $z\in\HH$, let $f\in\Gamma_0(\HH)$, and let $h\colon\HH\to\RR$ be convex and
differentiable with a $\beta_{0}^{-1}$-Lipschitzian gradient for 
some $\beta_0\in\left]0,+\infty\right[$.
Let $m$ be a strictly positive integer.
For every $i \in \{1,\ldots, m\}$, let $\GG_i$ be a real Hilbert space and $r_i\in\GG_i$,
 let $g_i \in \Gamma_0(\GG_i)$, 
let $\ell_i \in \Gamma_0(\GG_i)$ be $\beta_i$-strongly convex, for some
$\beta_i\in\left]0,+\infty\right[$. For every $i \in \{1,\ldots, m\}$, let  $L_i\colon\HH\to\GG_i$ is a nonzero bounded linear operator.
Consider the primal problem
\begin{equation}\label{e:pri1}
\underset{x\in\HH}{\text{minimize}} \;
f(x)+\sum_{i=1}^m(g_i\;\vuo\;\ell_i)(L_ix-r_i) + h(x) -\scal{z}{x},
\end{equation}
and the dual problem  
\begin{equation}\label{e:dual1}
 \underset{v_1\in\GG_1,\ldots,v_m\in\GG_m}{\text{minimize}} \;
(f^*\;\vuo\; h^*)\bigg(z-\sum_{i=1}^mL_{i}^*v_i\bigg) +
\sum_{i=1}^m \big(g^{*}_i(v_i)+\ell^{*}_i(v_i) +\scal{v_i}{r_i}\big).
\end{equation}
We denote by $\mathcal{P}_1$ and $\mathcal{D}_1$ the sets of 
solutions to~\eqref{e:pri1} and~\eqref{e:dual1}, respectively.
\end{example}
\begin{corollary}\label{probex6}
In Example~\ref{App1}, suppose that 
\begin{equation}\label{IVe:ranex6}
z\in\ran\bigg( \partial f + 
\sum_{i=1}^mL^{*}_i
\big((\partial g_i\;\vuo\;\partial\ell_i)(L_i\cdot-r_i)\big)
+\nabla h\bigg).
\end{equation}
For every $n\in\NN$, let $B_n\colon\HH\to\HH$ and
for every $i\in\{1,\ldots,m\}$, let $B_i\colon\GG_i\to\GG_i$ be such that:
\begin{enumerate}
\item 
The operators $B_n-\nabla h$ are $\kappa_{0,n}$-Lipschitz continuous with $\sum_{n\in\NN}\kappa_{0,n} <+\infty$ and 
\begin{equation}
    (\exists c_0\in \HH)(\forall n\in\NN) \; B_nc_0= \nabla h(c_0)\; 
    \end{equation}
\item For every $i\in \{1,\ldots,m\}$, 
the operators $B_{i,n}-\nabla\ell^{*}_i$ are $\kappa_{i,n}$-Lipschitz continuous with $\sum_{n\in\NN}\kappa_{i,n} <+\infty$, and 
\begin{equation}
    (\forall i\in\{1,\ldots,m\})(\exists c_{i},\in \GG_{i})(\forall n\in\NN) \; B_{i,n}c_{i}= \nabla\ell^{*}_i(c_i).
\end{equation}
\end{enumerate}
Let $\beta$, $\mu$ and $(\kappa_n)_{n\in\NN}$ be such that
\begin{equation}\label{eq:310c}
0 < \beta <  \beta'=\min_{0\leq i\leq m} \beta_i\; \text{and}\; 
\mu =\sqrt{\sum_{i=1}^m\|L_i\|^2}, (\forall n\in\NN)\; \kappa_n=\sqrt{\sum_{i=0}^m\kappa_{i,n}^2 },
\end{equation}
and let $\gamma, \epsilon$ be such that 
\begin{equation}\label{e:gam}
(1-\frac{\gamma}{2\beta}-2\gamma\mu-6\gamma\kappa_n -\gamma \kappa_{n+1}) \geq \epsilon > 0.
\end{equation}
Let $x_{-1}\in\HH$, $y_{-1} = \prox_{\gamma f}x_{-1} $, 
and  for every $i\in\{1,\ldots,m\}$, let $v_{i,-1} \in\GG_i$ and $w_{i,-1}= \prox_{\gamma f}v_{i,-1}$.
Let $x_{0}\in\HH$ and  for every $i\in\{1,\ldots,m\}$, let $v_{i,0} \in\GG_i$.
Let $(y_n)_{n\in\NN}$ and $(w_{1,n},\ldots,w_{m,n})_{n\in\NN}$ 
be sequences generated by  the following routine
\begin{equation}\label{IVe:eqalgoex6}
(\forall n\in\NN)\quad
\begin{array}{l}
\left\lfloor
\begin{array}{l}
y_n = \prox_{\gamma f} (x_n+\gamma z)\\
\operatorname{For \; \text{$i =1,\ldots,m$}}\\
\begin{array}{l}
\left\lfloor
\begin{array}{l}
w_{i,n} = \prox_{\gamma g^{*}_i} (v_{i,n}-\gamma r_i)\\
\end{array}
\right.\\[2mm]
\end{array}\\
x_{n+1} = y_{n}- \gamma \sum_{i=1}^m L_{i}^*(2w_{i,n}-w_{i,n-1}) -\gamma B_n(y_{n})\\
\operatorname{For \; \text{$i =1,\ldots,m$}}\\
\begin{array}{l}
\left\lfloor
\begin{array}{l}
v_{i,n+1} = w_{i,n}  + \gamma L_i(2y_{n}-y_{n-1}) -\gamma B_{i,n}(w_{i,n}).\\
\end{array}
\right.\\[2mm]
\end{array}\\
\end{array}
\right.\\[2mm]
\end{array}
\end{equation}
Then the following hold for some $\overline{x}\in\mathcal{P}_1$ and 
$(\overline{v}_{1},\ldots,\overline{v}_{m})\in\mathcal{D}_1$.
\begin{enumerate}
\item\label{7:10:06}
Suppose that $f+h$ is uniformly convex at $\overline{x}$. 
Then $y_n \to \overline{x}$.
\item\label{7:10:07}
Suppose that $g_{i}^*+\ell_{j}^{*}$ is uniformly convex at $\overline{v}_{j}$,
for some $j\in\{1,\ldots,m\}$. Then $w_{j,n}\to\overline{v}_{j}$.
\end{enumerate}
\end{corollary}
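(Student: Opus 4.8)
The plan is to read Corollary~\ref{probex6} as the specialization of Theorems~\ref{t:1} and~\ref{t:2} to the monotone operators attached to the subdifferentials of Example~\ref{App1}, exactly along the lines of the reduction from the optimization setting to Problem~\ref{p:primal-dual} already carried out in \cite{plc6,Vu2013}. Concretely, I would set $A=\partial f$, $B=\nabla h$, $Q=0$ and, for every $i\in\{1,\dots,m\}$, $A_i=\partial g_i^*$, $B_i=\nabla\ell_i^*$, $Q_i=0$. First I would record the required regularity: $\partial f\in\Gamma_0(\HH)$ and the $\partial g_i^*$ are maximally monotone; $\nabla h$ is $\beta_0$-cocoercive by the Baillon--Haddad theorem \cite{livre1} since it is $\beta_0^{-1}$-Lipschitzian and $h$ is convex; and, since $\ell_i$ is $\beta_i$-strongly convex, $\ell_i^*$ is finite and Fr\'echet differentiable on all of $\GG_i$ with $\beta_i^{-1}$-Lipschitzian gradient, so $\nabla\ell_i^*$ is $\beta_i$-cocoercive. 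The operators $Q$ and $(Q_i)_i$ are monotone and, trivially, $\varepsilon$-Lipschitzian for every $\varepsilon>0$, so the parameters $\mu_0,\mu_i$ in \eqref{eq:310} may be taken arbitrarily small and the constant $\mu$ of Theorem~\ref{t:1} collapses to $\sqrt{\sum_{i=1}^m\|L_i\|^2}$, matching \eqref{eq:310c}; the $(\kappa_n)_{n\in\NN}$ and the stepsize rule \eqref{e:gam} are already identical in the two statements.

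Next I would check that this data turns Problem~\ref{p:primal-dual} into the primal--dual pair \eqref{e:pri1}--\eqref{e:dual1}. The key identity is $(A_i+B_i+Q_i)^{-1}=(\partial g_i^*+\nabla\ell_i^*)^{-1}=\big(\partial(g_i^*+\ell_i^*)\big)^{-1}=\partial\big((g_i^*+\ell_i^*)^*\big)=\partial(g_i\,\vuo\,\ell_i)=\partial g_i\,\vuo\,\partial\ell_i$, where the subdifferential sum rule applies because $\ell_i^*$ is finite and continuous on $\GG_i$, and the last equality uses exactness of infimal convolution with a strongly convex function \cite{livre1}. With this, the range hypothesis \eqref{IVe:ranex6} is precisely \eqref{c:1} for the present data, so Lemma~\ref{l:2} gives $\zer(\AAA+\SSS+\BB)\neq\emp$, and Fermat's rule combined with the same subdifferential calculus (chain rule for $L_i$, sum rule for $\nabla h$) shows that $\overline x$ solves \eqref{e:pri} if and only if $\overline x\in\mathcal{P}_1$, and $(\overline v_1,\dots,\overline v_m)$ solves \eqref{e:dual} if and only if it lies in $\mathcal{D}_1$; this is the content of \cite[\S4]{plc6} and \cite{Vu2013}.

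With the dictionary in place I would observe that, because $J_{\gamma\partial f}=\prox_{\gamma f}$, $J_{\gamma\partial g_i^*}=\prox_{\gamma g_i^*}$ and all the $Q$-type operators vanish, Algorithm~\ref{algo:main} written for this data is verbatim the recursion \eqref{IVe:eqalgoex6}, while Condition~\ref{c:new} reduces to the stated hypotheses on $B_n-\nabla h$ and $B_{i,n}-\nabla\ell_i^*$ (the $Q$-parts being vacuous). Hence Theorem~\ref{t:1} applies and yields $y_n\weakly\overline x\in\mathcal{P}_1$ and $w_{i,n}\weakly\overline v_i$ with $(\overline v_1,\dots,\overline v_m)\in\mathcal{D}_1$. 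For the strong convergence statements I would translate uniform convexity into uniform monotonicity: if $f+h$ is uniformly convex at $\overline x$ with modulus $\phi$, then subtracting the two defining subgradient inequalities shows that $\partial(f+h)=\partial f+\nabla h=A+B+Q$ is $\phi$-uniformly monotone at $\overline x$ (the sum rule being valid since $h$ is finite-valued), so Theorem~\ref{t:2}\ref{t:2i} gives $y_n\to\overline x$; similarly, uniform convexity of $g_j^*+\ell_j^*$ at $\overline v_j$ makes $A_j+B_j+Q_j$ uniformly monotone at $\overline v_j$, and Theorem~\ref{t:2}\ref{t:2ii} gives $w_{j,n}\to\overline v_j$.

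I do not expect a genuinely hard step here: the statement is an application. The only point requiring care is the convex-analytic bookkeeping of the second paragraph --- verifying that the qualification conditions for the sum and chain rules and for exactness of the inf-convolutions are met (they are, thanks to everywhere-finiteness and continuity of $h$ and of the $\ell_i^*$), so that the optimality conditions for \eqref{e:pri1}--\eqref{e:dual1} coincide exactly with the inclusions \eqref{e:pri}--\eqref{e:dual}. Everything concerning the inexact gradient evaluations $B_n$ and $B_{i,n}$ is already absorbed into Theorems~\ref{t:1} and~\ref{t:2} via Condition~\ref{c:new}, so no additional estimate is needed.
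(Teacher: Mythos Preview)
Your proposal is correct and follows essentially the same route as the paper's own proof: the same dictionary $A=\partial f$, $B=\nabla h$, $Q=0$, $A_i=\partial g_i^*$, $B_i=\nabla\ell_i^*$, $Q_i=0$, the same appeal to Baillon--Haddad and to strong convexity of $\ell_i$ for cocoercivity, the same identity $(A_i+B_i+Q_i)^{-1}=\partial g_i\,\vuo\,\partial\ell_i$ reducing \eqref{IVe:ranex6} to \eqref{c:1}, and the same invocation of Theorem~\ref{t:2} after identifying \eqref{IVe:eqalgoex6} with \eqref{algo:primal-dual}. If anything, you are slightly more careful than the paper in two places---explaining why the $\max_i\mu_i$ term in \eqref{eq:310} disappears (by taking the $\mu_i$ arbitrarily small) and spelling out that uniform convexity of $f+h$ yields uniform monotonicity of $\partial(f+h)=A+B+Q$---both of which the paper passes over in a single clause.
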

\begin{proof}
Set 
\begin{equation}\label{eq:663}
A = \partial f, B=\nabla h, Q=0\;
\text{and}\; (\forall i \in\{1,\ldots,m\})\; Q_i=0,
B_{i}=\nabla \ell^{*}_i, A_i=\partial g^{*}_i,  
\end{equation}
Since $\nabla h$ is $\beta_{0}^{-1}$-Lipschitz continuous, 
by the Baillon-Haddad Theorem~\cite[Corollary 18.17]{livre1}, 
$B$ is $\beta_0$-cocoercive. Since $f\in\Gamma_0(\HH)$, $A$ is maximally monotone. 
Moreover since, 
for every $i\in\{1,\ldots,m\}$, $\ell_{i}$ is $\beta_i$-strongly 
convex, $\partial \ell_{i}$ is $\beta_{i}$-strongly monotone and hence
$B_i$ is $\beta_i$-cocoercive. Since $g_i\in\Gamma_0(\HH)$, $A_i$ is maximally monotone.
Furthermore
\begin{equation}\label{eq:676}
(\forall i \in\{1,\ldots,m\}) \; \partial g_i\;\vuo\;\partial\ell_i= (\partial g^{*}_i+\partial\ell^{*}_i)^{-1} = (A_i+B_i+Q_i)^{-1},
\end{equation}
which shows that the condition \eqref{IVe:ranex6} implies that \eqref{c:1} is satisfied. 
Therefore,  every condition of Problem \ref{p:primal-dual} is satisfied.
Moreover, $J_{\gamma A}=\prox_{\gamma  f}$, and for every $i \in\{1,\ldots,m\}$,   
$J_{\gamma  A_i}=\prox_{\gamma g_{i}^*}$. Hence, \eqref{IVe:eqalgoex6}
is an instance of \eqref{algo:primal-dual}. Since, for every $i \in\{1,\ldots,m\}$, $Q_i=0$, 
\eqref{eq:310c} shows that \eqref{eq:310} is satisfied. 
 Hence, by applying Theorem~\ref{t:2}\ref{t:2i}\&\ref{t:2ii}, 
  we obtain that, if $f+h$ is uniformly monotone at $\overline{x}$,
the sequence $(y_n)_{n\in\NN}$ converges strongly to some
$\overline{x}\in\mathcal{P}_1$, and if $g^*{_j}+\ell_{j}^*$ is uniformly monotone at 
$\overline{v}_j$
 the sequence $(w_{j,n})_{n\in\NN}$ 
converges strongly to $\overline{v}_j$.
\end{proof}
\begin{remark} The algorithm \eqref{IVe:eqalgoex6} is an instance of \eqref{algo:primal-dual} and the weak convergence of the iteration is obtained by applying Theorem \ref{t:1}. When $m=1$, a stochastic version of \eqref{IVe:eqalgoex6} is recently investigated in \cite{VD1} where the further connection to exiting works 
\cite{sva2, Malitsky18b,Tseng00} are provided.
\end{remark}
\begin{remark} In the case, when $(\forall n\in\NN)\; B_n= \nabla h$ and $(\forall i\in\{1,\ldots,n\})\; B_{i,n}= \nabla\ell^{*}_{i,n}$, the algorithm \eqref{IVe:eqalgoex6} and the method in \cite{Vu2013}  use only one call of $B$, $(L_i)_{1\leq i\leq m}$  and $\nabla\ell_{i,n}^*$,  $(L^{*}_i)_{1\leq i\leq m}$ per one iteration. 
Whiles, the method in \cite{plc6} uses two calls  of
 $B$, $(L_i)_{1\leq i\leq m}$  and $\nabla\ell_{i,n}^*$,  $(L^{*}_i)_{1\leq i\leq m}$  per iteration. Hence, the algorithm \eqref{IVe:eqalgoex6} shares the same computational cost as the one in \cite{Vu2013} and their computational cost is cheapter than that of the method in \cite{plc6}.
    
\end{remark}
 
\end{document}